\newcommand{\la}{\lambda}
\newcommand{\de}{\delta}
\newcommand{\e}{e}
\newcommand{\m}{\mathbf} 
\newcommand{\R}{\mathbb{R}}
\newcommand{\Z}{\mathbb{Z}}
\newcommand{\N}{\mathbb{N}}
\newcommand{\iv}[1]{{#1}^{-1}}
\newcommand{\ov}[1]{\overline{#1}}
\newcommand{\De}{\mathrm{\Delta}}
\newcommand{\Ga}{\mathrm{\Gamma}}
\newcommand{\PI}{\mathrm{\Pi}}
\newcommand{\Si}{\mathrm{\Sigma}}
\newcommand{\Om}{\mathrm{\Omega}}
\newcommand{\Th}{\mathrm{\Theta}}
\newcommand{\vty}[1]{\mathcal{#1}}
\newcommand{\lgc}[1]{\mathrm{#1}}
\newcommand{\der}[1]{\vdash_{\lgc{#1}}}
\newcommand{\eq}{\approx}
\newcommand*{\mg}{{\mathcal G}}
\newcommand*{\mh}{{\mathcal H}}
\newcommand*{\h}{\mid}
\newcommand*{\hh}{\!\mid\!}
\newcommand{\starr}{(*)}
\newcommand{\exr}{\textup{\sc (ex)}}
\newcommand{\cutr}{\textup{\sc (cut)}}
\newcommand{\idr}{\textup{\sc (id)}}
\newcommand{\comr}{\textup{\sc (com)}}
\newcommand{\ewr}{\textup{\sc (ew)}}
\newcommand{\mixr}{\textup{\sc (mix)}}
\newcommand{\splitr}{\textup{\sc (split)}}
\newcommand{\cycr}{\textup{\sc (cycle)}}
\newcommand{\emr}{\textup{\sc (em)}}
\newcommand{\gvr}{\textup{\sc (gv)}}
\newcommand{\seq}{\Rightarrow}
\newcommand{\sgr}[1]{\langle #1 \rangle} 
\newcommand{\nsgr}[1]{\langle\langle #1 \rangle\rangle} 
\begin{document}

\title{Proof Theory and Ordered Groups}
\titlerunning{Proof Theory and Ordered Groups}
\author{Almudena Colacito \and George Metcalfe\thanks{Supported by Swiss National Science Foundation 
grant 200021{\_}146748 and the EU Horizon 2020 research and innovation programme under the Marie Sk{\l}odowska-Curie grant agreement No 689176.}}

\institute{
Mathematical Institute, University of Bern, Switzerland\\
\email{\{almudena.colacito,george.metcalfe\}@math.unibe.ch} 
}	

\maketitle


\begin{abstract}
Ordering theorems, characterizing when partial orders of a group extend to total orders, are used to generate hypersequent calculi for varieties of lattice-ordered groups ($\ell$-groups). These calculi are then used to provide new proofs of theorems arising in the theory of ordered groups. More precisely: an analytic calculus for abelian $\ell$-groups is generated using an ordering theorem for abelian groups; a calculus  is generated for $\ell$-groups and  new decidability proofs are obtained for the equational theory of this variety and extending finite subsets of free groups to right orders; and a calculus for representable $\ell$-groups is generated and a new proof is obtained that free groups are orderable.	
\end{abstract}


\section{Introduction}

Considerable success has been enjoyed recently in obtaining uniform algebraic completeness proofs for analytic sequent and hypersequent calculi with respect to varieties of residuated lattices~\cite{Ter07,CGT12,CGT17}. These methods do not encompass, however, ``ordered group-like'' structures: algebras with a group reduct such as lattice-ordered groups ($\ell$-groups)~\cite{AF88,KM94} and others admitting representations via ordered groups such as MV-algebras~\cite{COM99}, GBL-algebras~\cite{JM10}, and varieties of cancellative residuated lattices~\cite{MT10}. Hypersequent calculi have indeed been defined for abelian $\ell$-groups, MV-algebras, and related classes in~\cite{MOG05,MOG08} and for $\ell$-groups in~\cite{GM16}, but the completeness proofs in these papers are largely syntactic, proceeding using cut elimination or restricted quantifier elimination. 

The first aim of the work reported here is to use ordering theorems for groups, characterizing when a partial (right) order of a group extends to a total (right) order, to generate hypersequent calculi for varieties of lattice-ordered groups, thereby taking a first step towards a general algebraic proof theory for ordered group-like structures. A second aim is to then use these calculi to provide new syntactic proofs of various theorems arising in the theory of ordered groups.

More concretely, this paper makes the following contributions:

\begin{itemize}

\item[(i)]	A theorem of Fuchs~\cite{Fuc63} for extending partial orders of abelian groups to total orders is used to generate an analytic (cut-free) hypersequent calculus for the variety of abelian $\ell$-groups. This system can be viewed as a one-sided version of the two-sided hypersequent calculus introduced in~\cite{MOG05}.
\smallskip

\item[(ii)]	A theorem of Kopytov and Medvedev~\cite{KM94} for extending partial right orders of groups to total right orders is used to generate a hypersequent calculus for the variety of $\ell$-groups, a variant of a calculus appearing in~\cite{GM16}. The method also provides a correspondence between validity of equations in $\ell$-groups and the extension of finite subsets of free groups to total right orders, giving new proofs of decidability for these problems. \smallskip

\item[(iii)]	A  theorem of Fuchs~\cite{Fuc63} for extending partial orders of groups to total orders is used to generate a calculus for representable $\ell$-groups (equivalently, ordered groups) and to provide a new proof that free groups are orderable.

\end{itemize}


\section{Ordered Groups}\label{s:orderedgroups}

In this section, we recall some pertinent definitions and basic facts about ordered groups, referring to~\cite{AF88,KM94} for further details. Consider a group $\m{G} = \langle G,\cdot,\iv{},\e \rangle$. A partial order $\le$ of $G$ is called a {\em partial right order} of $\m{G}$ if for all $a,b,c \in G$, 
\[
a \le b \ \Longrightarrow \ ac \le bc.
\]
Its {\em positive cone} $P_\le = \{a \in G : \e < a\}$ is a subsemigroup of $\m{G}$ that omits $\e$. Conversely, if $P$ is a subsemigroup of $\m{G}$ omitting $\e$, then 
\[
a \le^P b \ \Longleftrightarrow \ b\iv{a} \in P \cup \{\e\}
\]
is a partial right order of $\m{G}$ satisfying $P_{\le^P} = P$.  Hence partial right orders of  $\m{G}$ can be identified with subsemigroups of $\m{G}$  omitting $\e$. Note also that for $S \subseteq G$, the  subsemigroup of $\m{G}$ generated by $S$, denoted by $\sgr{S}$, is a partial right order of $\m{G}$ if and only if $\e \not \in \sgr{S}$. Partial left orders of $\m{G}$ are defined analogously. 

A partial left and right order $\le$ of $\m{G}$ is called a {\em partial order} of $\m{G}$. In this case, the positive cone $P_\le$ is a normal subsemigroup of $\m{G}$ omitting $\e$; that is, whenever $a \in P_\le$ and $b \in G$, also $ba\iv{b} \in P_\le$. Conversely, if a subset $P \subseteq G$ has these properties, then $\le^P$ is a partial order of $\m{G}$; hence, partial orders of $\m{G}$ can be identified with normal subsemigroups of $\m{G}$ omitting $\e$. Also, for $S \subseteq G$, the normal subsemigroup of $\m{G}$ generated by $S$, denoted by $\nsgr{S}$, is a partial order of $\m{G}$  if and only if $\e \not \in \nsgr{S}$.

A partial order or partial right order $\le$ of $\m{G}$ is called, respectively, a {\em (total) order} or {\em (total) right order} of $\m{G}$ if $G = P_\le \cup \iv{P_\le} \cup \{\e\}$. Note also that if $\le$ is an order or a right order of $\m{G}$, then the same holds for the {\em inverse order} defined by $a \le^\delta b$ if and only if $b \le a$. In this paper we  focus mostly on (right) orders of a finitely generated free (abelian) group $\m{F}$ and address the following problem.
 
\begin{problem}
Does a given finite $S \subseteq F$ extend to an order or a right order of $\m{F}$?
\end{problem}

We  also consider a purely algebraic perspective on ordered groups.  That is, a {\em lattice-ordered group} (or {\em $\ell$-group}) may be defined as an algebraic structure $\m L = \langle L, \land, \lor, \cdot, \iv{}, \e \rangle$ satisfying
\begin{itemize}
\item[\rm (i)]	$\langle L,\cdot, \iv{}, \e \rangle$ is a group;\smallskip
\item[\rm (ii)]	$\langle L, \land, \lor \rangle$ is a lattice (with $a \le b \, \Leftrightarrow \, a \land b = a$, for all $a,b\in L$); \smallskip
\item[\rm (iii)]	$a \le b \ \Longrightarrow \ cad \le cbd$, for all $a,b,c,d\in L$.
\end{itemize}
It follows also that $\langle L, \land, \lor \rangle$ must be a distributive lattice and that $\m{L}$ satisfies $\e \le a \lor \iv{a}$ for all $a \in L$ (see~\cite{AF88}). If $\le$ is a total order of the group $\langle L,\cdot, \iv{}, \e \rangle$, then $\m{L}$ is called an {\em ordered group} (or {\em o-group}), observing that $\m{L}$ can also be obtained by adding to the group operations the meet and join operations for $\le$. An $\ell$-group whose group operation is commutative is called an {\em abelian $\ell$-group}. 

\begin{example}\label{e:abelian}
Standard examples of abelian $\ell$-groups are  subgroups of the additive group over the real numbers equipped with the usual order, e.g., 
\[
\m{Z} = \langle \Z, \min, \max, +, -, 0 \rangle.
\]
Indeed this algebra generates the variety $\vty{A}$ of all abelian $\ell$-groups~\cite{Wei63}, which means in particular that an equation is valid in $\vty{A}$ if and only if is valid in $\m{Z}$.
\end{example}

\begin{example}
Fundamental  examples of (non-abelian) $\ell$-groups are provided by considering the order-preserving bijections of some totally-ordered set $\langle \Om,\le\rangle$. These form an $\ell$-group $\m{Aut}(\langle \Om, \le \rangle)$ under coordinate-wise lattice operations, functional composition, and functional inverse. Indeed, it has been shown by Holland that every $\ell$-group embeds into an $\ell$-group $\m{Aut}(\langle \Om,\le\rangle)$ for some totally-ordered set $\langle \Om,\le\rangle$~\cite{Hol63}, and that the variety $\vty{LG}$ of $\ell$-groups is generated by $\m{Aut}(\langle \R,\le \rangle)$, where $\le$ is the usual order on $\R$~\cite{Hol76}. This means in particular that an $\ell$-group equation is valid in $\vty{LG}$  if and only if is valid in $\m{Aut}(\langle \R,\le \rangle)$.
\end{example}

Let us turn our attention now to the syntax of $\ell$-groups. We call a variable $x$ and its inverse $\iv{x}$ {\em literals}, and consider terms $s,t,\dots$ built from literals over variables $x_1,x_2,\ldots$, operation symbols $\e$, $\land$, $\lor$, and $\cdot$, defining also inductively
\[
\begin{array}{rclcrclcrcl}
\ov{x} 			& = & \iv{x}  &\qquad 			& \ov{\iv{x}} 	& = & x 		& \qquad 	& \ov{\e}			& = & \e \smallskip \\
\ov{s \land t} 	& = & \ov{s} \lor \ov{t} & 			& \ov{s \lor t} 		& = & \ov{s} \land \ov{t} &      & \ov{s \cdot t}	&  = & \ov{t} \cdot \ov{s}.
\end{array}
\]
Using the strong distributivity properties of the $\ell$-group operations, it follows that every $\ell$-group term is equivalent in $\vty{LG}$ to a term of the form $\land_{i \in I} \lor_{j \in J_i} t_{ij_i}$ where each  $t_{ij_i}$ is a group term. Hence to check the validity of equations in some class $ \vty{K}$ of  $\ell$-groups, it suffices to address the following problem.

\begin{problem}
Given group terms $t_1,\ldots,t_n$, does it hold that
\[
 \vty{K}\, \models\, \e \le t_1 \lor \ldots \lor t_n \ ?
\] 
\end{problem}

\noindent
Let us therefore define a {\em sequent} $\Ga$ as a finite sequence of literals $\ell_1,\ldots,\ell_n$ with inverse $\ov{\Ga}  = \ov{\ell_n},\ldots,\ov{\ell_1}$, and a {\em hypersequent} $\mg$ as a finite set of sequents, written
\[
\Ga_1 \h \ldots \h \Ga_n.
\]
In what follows, we identify a sequent $\ell_1,\ldots,\ell_n$ with the group term $\ell_1 \cdot \ldots \cdot \ell_n$ for $n > 0$ and $\e$ for $n = 0$, and a non-empty hypersequent $\Ga_1 \h \ldots \h \Ga_n$ with the $\ell$-group term $\Ga_1 \lor \ldots \lor \Ga_n$. We will say that a non-empty hypersequent $\mg$ is {\em valid} in a class of $\ell$-groups $\vty{K}$ and write $\vty{K} \models \mg$,  if $\vty{K} \models \e \le \mg$. We will also say that a sequent $\Ga$ is {\em group valid} if $\Ga \eq \e$ is valid in all groups.

A {\em hypersequent rule} is a set of {\em instances}, each instance consisting of a finite set of hypersequents called the {\em premises} and a hypersequent called the {\em conclusion}. Such rules are typically written schematically using $\Ga,\PI,\Si,\De$ and $\mg,\mh$ to denote arbitrary sequents and hypersequents, respectively. A {\em hypersequent calculus} $\lgc{GL}$ is a set of hypersequent rules, and a {\em $\lgc{GL}$-derivation} of a hypersequent $\mg$ is a finite tree of hypersequents with root $\mg$ such that each node and its parents form an instance of a rule of $\lgc{GL}$. In this case, we write $\der{\lgc{GL}} \mg$. A hypersequent rule is said to be $\lgc{GL}$-{\em admissible} if for each of its instances, whenever the premises are $\lgc{GL}$-derivable, the conclusion is  $\lgc{GL}$-derivable.

\begin{remark}
Sequents are often defined (see, e.g.,~\cite{MOG05,MOG08,CGT12,CGT17}) as ordered pairs of finite sequences (or sets or multisets) of terms, and hypersequents as finite multisets of sequents. Here we exploit the strong duality properties of $\ell$-groups to restrict to one-sided sequents and define hypersequents as finite sets of sequents to emphasize the connection with finite sets of group terms. 
\end{remark}


\section{A Hypersequent Calculus for Abelian $\ell$-Groups}

We use the following ordering theorem for abelian groups  to rediscover a single-sided version of the hypersequent calculus for abelian $\ell$-groups defined  in~\cite{MOG05}.

\begin{theorem}[Fuchs 1963~\cite{Fuc63}] \label{t:fuchs}
Every partial order of a torsion-free abelian group $\m{G}$ extends to an order of $\m{G}$. 
\end{theorem}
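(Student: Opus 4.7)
The plan is to proceed by a standard Zorn's lemma argument on the set of subsemigroups of $\m{G}$ omitting the identity, exactly like the analogous extension theorem for linear orders on arbitrary groups, with the torsion-free hypothesis supplying the one place where the argument could otherwise fail.

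First, using the identification noted earlier in the excerpt, a partial order of $\m{G}$ is a (normal) subsemigroup $P$ of $\m{G}$ with $\e \notin P$; since $\m{G}$ is abelian, every subsemigroup is automatically normal, so I can ignore the normality requirement throughout. Let $\vty{F}$ be the family of all subsemigroups $S \supseteq P$ with $\e \notin S$, ordered by inclusion. The union of any chain in $\vty{F}$ is again in $\vty{F}$ (closure under the binary operation is witnessed by two elements, which lie in a common member of the chain, and $\e$ is still avoided), so Zorn's lemma yields a maximal element $Q \in \vty{F}$.

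The heart of the proof is then to show that $G = Q \cup \iv{Q} \cup \{\e\}$. Suppose toward a contradiction that some $g \in G$ lies outside this set. Consider the subsemigroups $\sgr{Q \cup \{g\}}$ and $\sgr{Q \cup \{\iv{g}\}}$. Since each strictly contains $Q$, maximality forces each to contain $\e$. Using that $\m{G}$ is abelian, every element of $\sgr{Q \cup \{g\}}$ has the form $q g^{k}$ with $q \in Q \cup \{\e\}$ and $k \ge 0$, not both trivial. Writing $\e$ in this form gives two cases: either $g^{k} = \e$ for some $k \ge 1$, which the torsion-free hypothesis rules out, or $\iv{g^{m}} \in Q$ for some $m \ge 1$. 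By the symmetric argument applied to $\iv{g}$, we also obtain $g^{n} \in Q$ for some $n \ge 1$. Then $(\iv{g^{m}})^{n}(g^{n})^{m} = \e$ lies in $Q$ (as $Q$ is a subsemigroup), contradicting $\e \notin Q$.

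The argument is almost entirely routine, so there is no serious obstacle; the single delicate point is the invocation of torsion-freeness, which is exactly what blocks the pathological case $g^{k} = \e$ when one tries to adjoin a new element. The other noteworthy aspect is that the whole proof relies on commutativity in two places: to avoid imposing normality on the candidate subsemigroups, and to give a clean normal form for elements of $\sgr{Q \cup \{g\}}$. One could record this as a remark pointing toward the more subtle right-order version needed later in the paper, where commutativity must be dropped and the corresponding extension theorem (of Kopytov and Medvedev) requires a genuinely different combinatorial input.
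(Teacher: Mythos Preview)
Your argument is correct and is essentially the classical proof one finds in Fuchs's book: Zorn's lemma on positive cones, followed by the observation that a maximal cone $Q$ must satisfy $G = Q \cup \iv{Q} \cup \{\e\}$, with torsion-freeness used precisely to exclude the case $g^{k} = \e$ when adjoining a new element. The case analysis and the final contradiction $(\iv{g^{m}})^{n}(g^{n})^{m} = \e \in Q$ are handled cleanly.

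Note, however, that the paper does not give its own proof of this theorem; it is quoted as a result of Fuchs~1963 and used as a black box in the proof of the subsequent theorem on abelian $\ell$-groups. So there is no proof in the paper to compare against. Your write-up would serve perfectly well as a self-contained justification, and your closing remark about where commutativity enters (normality for free, and the normal form $qg^{k}$) is apt, since the later right-order analogue for arbitrary groups genuinely cannot proceed this way and requires the Kopytov--Medvedev criterion instead.
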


Let $\vty{A}b$ be the variety of abelian groups and let $\m{T}(k)$ be the algebra of group terms on $k \in \N$ generators. We may identify the free abelian group $\m{F}_{\vty{A}b}(k)$ on $k$ generators with the quotient $\m{T}(k) / \Th_{\vty{A}b}$, where $\Th_{\vty{A}b}$ is the congruence on  $\m{T}(k)$ defined by $s \Th_{\vty{A}b} t \ \Leftrightarrow \ \vty{A}b \models s \eq t$ (see~\cite{BS81} for further details). For convenience, we will use $t \in T(k)$ to denote also $t / \Th_{\vty{A}b}$ in $F_{\vty{A}b}(k)$, noting that $\vty{A}b \models s \eq t$ if and only if $s=t$ in $\m{F}_{\vty{A}b}(k)$.  It follows easily that $\m{F}_{\vty{A}b}(k)$ is torsion-free.

\begin{theorem} \label{t:abelian}
The following are equivalent for $t_1,\ldots,t_n \in T(k)$:
\begin{itemize}

\item[\rm (1)]		$\vty{A} \models \e \le t_1 \lor \ldots \lor t_n$.\smallskip
	
\item[\rm (2)]		$\{t_1,\ldots,t_n\}$ does not extend to an order of $\m{F}_{\vty{A}b}(k)$.\smallskip		

\item[\rm (3)]		$\e \in \sgr{\{t_1,\ldots,t_n\}}$.\smallskip

\item[\rm (4)]		${\vty{A}b} \models \e \eq  t_1^{\la_1}  \cdots t_n^{\la_n}$ for some $\lambda_1,\ldots,\lambda_n \in \N$ not all $0$.

\end{itemize}
\end{theorem}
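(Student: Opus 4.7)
The plan is to establish the four equivalences via the cycle $(1) \Rightarrow (3) \Leftrightarrow (4) \Leftrightarrow (2)$ together with $(4) \Rightarrow (1)$, relying on Fuchs's theorem for the extension steps and on $\m{Z}$ generating $\vty{A}$ for the $\ell$-group semantic step.

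The equivalence $(3) \Leftrightarrow (4)$ is essentially unpacking a definition. Since $\m{F}_{\vty{A}b}(k)$ is abelian, any element of the subsemigroup $\sgr{\{t_1,\ldots,t_n\}}$ can be rewritten as $t_1^{\la_1} \cdots t_n^{\la_n}$ with $\la_i \in \N$ and at least one $\la_i > 0$; conversely, any such product lies in $\sgr{\{t_1,\ldots,t_n\}}$. So $\e \in \sgr{\{t_1,\ldots,t_n\}}$ if and only if $\e = t_1^{\la_1} \cdots t_n^{\la_n}$ in $\m{F}_{\vty{A}b}(k)$, which is precisely (4).

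For $(2) \Leftrightarrow (3)$, I would recall that in an abelian group every subsemigroup is normal, so partial orders of $\m{F}_{\vty{A}b}(k)$ correspond bijectively with subsemigroups omitting $\e$. Therefore $\sgr{\{t_1,\ldots,t_n\}}$ is a partial order iff $\e \notin \sgr{\{t_1,\ldots,t_n\}}$, and since $\m{F}_{\vty{A}b}(k)$ is torsion-free, Theorem~\ref{t:fuchs} then lifts this partial order to a total order of $\m{F}_{\vty{A}b}(k)$ containing every $t_i$ in its positive cone. Conversely, if the $t_i$ extend to a total order, its positive cone contains $\sgr{\{t_1,\ldots,t_n\}}$ and omits $\e$. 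Thus $(2)$ fails iff $(3)$ fails.

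For $(4) \Rightarrow (1)$, since $\vty{A}$ is generated by $\m{Z}$ (Example~\ref{e:abelian}), it is enough to argue in $\m{Z}$: any valuation sends $t_i$ to some integer $a_i$, and the identity $t_1^{\la_1} \cdots t_n^{\la_n} = \e$ in $\m{F}_{\vty{A}b}(k)$ forces $\sum_i \la_i a_i = 0$; since some $\la_i$ is positive, not all $a_i$ can be negative, whence $\max(a_1, \ldots, a_n) \ge 0$, i.e., $\e \le t_1 \lor \ldots \lor t_n$ in $\m{Z}$. Finally, $(1) \Rightarrow (3)$ goes by contraposition: if $\e \notin \sgr{\{t_1,\ldots,t_n\}}$, then $\sgr{\{t_1,\ldots,t_n\}}$ is a partial order of $\m{F}_{\vty{A}b}(k)$, and so is its inverse $\sgr{\{\iv{t_1},\ldots,\iv{t_n}\}}$; by Theorem~\ref{t:fuchs} this inverse extends to a total order $\le^\ast$ of $\m{F}_{\vty{A}b}(k)$ in which $t_i <^\ast \e$ for each $i$. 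Viewing this as an abelian o-group and evaluating at the canonical assignment, $\e \not\le^\ast t_1 \lor \ldots \lor t_n$, contradicting $(1)$.

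The only step needing real work is $(1) \Rightarrow (3)$, and even that is routine once one thinks to invert the generating set before applying Fuchs; the main conceptual point throughout is just that Fuchs's ordering theorem is precisely what lets us convert semantic validity in $\vty{A}$ into the combinatorial condition (3).
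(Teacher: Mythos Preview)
Your proof is correct and follows essentially the same blueprint as the paper: both hinge on Fuchs's theorem to pass between the order-theoretic condition~(2) and the semigroup condition~(3), and both obtain the failure of~(1) from an explicit ordered abelian group built on $\m{F}_{\vty{A}b}(k)$ with the $t_i$ negative. The paper organizes this as the cycle $(1)\Rightarrow(2)\Rightarrow(3)\Rightarrow(4)\Rightarrow(1)$, whereas you prove $(2)\Leftrightarrow(3)\Leftrightarrow(4)$ and close the loop with $(1)\Rightarrow(3)$ and $(4)\Rightarrow(1)$; this introduces a mild redundancy (Fuchs is invoked twice, once for $(2)\Leftrightarrow(3)$ and again inside your $(1)\Rightarrow(3)$), but nothing is wrong.

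The one genuine methodological difference is in $(4)\Rightarrow(1)$. You argue arithmetically in $\m{Z}$ via the generation result of Example~\ref{e:abelian}: if $\sum_i \la_i a_i = 0$ with some $\la_i>0$, then $\max_i a_i \ge 0$. The paper instead stays inside the variety $\vty{A}$ and uses the $\ell$-group implication $\vty{A}\models \e\le uv\lor t \Rightarrow \vty{A}\models \e\le u\lor v\lor t$, applied repeatedly to split $t_1^{\la_1}\cdots t_n^{\la_n}$ into $t_1\lor\cdots\lor t_n$. Your route is a touch more elementary and self-contained; the paper's route has the advantage of not appealing to the (nontrivial) fact that $\m{Z}$ generates $\vty{A}$, and its splitting implication is precisely what later becomes the hypersequent rule $\splitr$.
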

\begin{proof}
(1) $\Rightarrow$ (2). By contraposition. If $\{t_1,\ldots,t_n\}$ extends to an order of $\m{F}_{\vty{A}b}(k)$, then, taking the inverse order, we  obtain an ordered abelian group where $t_1,\ldots,t_n$ are negative. But this ordered abelian group may also be viewed as an abelian $\ell$-group and taking the  evaluation mapping $t \in T(k)$ to $t \in F_{\vty{A}b}(k)$, we obtain $\vty{A} \not \models \e \le t_1 \lor \ldots \lor t_n$.\smallskip

(2) $\Rightarrow$ (3). Suppose that $\{t_1,\ldots,t_n\}$ does not extend to an order of $\m{F}_{\vty{A}b}(k)$. Then, since $\m{F}_{\vty{A}b}(k)$ is torsion-free, by Theorem~\ref{t:fuchs}, the subsemigroup $\sgr{\{t_1,\ldots,t_n\}}$ is not a partial order of $\m{F}_{\vty{A}b}(k)$. That is, $\e \in \sgr{\{t_1,\ldots,t_n\}}$. \smallskip

(3) $\Rightarrow$ (4). Suppose that $\e \in \sgr{\{t_1,\ldots,t_n\}}$. Then $\e =  t_1^{\la_1}  \cdots t_n^{\la_n}$ in $\m{F}_{\vty{A}b}(k)$ for some $\lambda_1,\ldots,\lambda_n \in \N$ not all~$0$, and hence $\vty{A}b \models \e \eq  t_1^{\la_1}  \cdots t_n^{\la_n}$.\smallskip

(4) $\Rightarrow$ (1). Suppose that ${\vty{A}b} \models \e \eq  t_1^{\la_1}  \cdots t_n^{\la_n}$  for some $\lambda_1,\ldots,\lambda_n \in \N$ not all~$0$. Then also $\vty{A} \models \e \le  t_1^{\la_1}  \cdots t_n^{\la_n}$. It is easily proved that $\vty{A} \models \e \le uv \lor t$ implies $\vty{A} \models \e \le u \lor v  \lor t$ (see, e.g.~\cite{GM16}). Hence, applying this implication repeatedly, we obtain $\vty{A} \models \e \le t_1 \lor \ldots \lor t_n$. 
\qed
\end{proof}

\begin{remark}
Theorem~\ref{t:abelian} may be interpreted geometrically as a variant of Gordan's theorem of the alternative (with integers swapped for real numbers) and close relative of Farkas' lemma (see, e.g.,~\cite{Dan63}). Namely, given an $m \times n$ integer matrix $A = (a_{ij})$, exactly one of the following systems has a solution:
\begin{itemize}
\item[\rm (a)]	$y^T A < 0$  for some $y \in \Z^m$.\smallskip
\item[\rm (b)]	$Az = 0$ for some $z \in \N^n \setminus \{0\}$.
\end{itemize}
To prove this, define $t_i = x_1^{a_{1i}} \cdot \ldots \cdot x_m^{a_{mi}}$ for $i = 1, \ldots, n$. Then (a) is equivalent to $\m{Z} \not \models \e \le t_1 \lor \ldots \lor\, t_n$, which is in turn equivalent to $\vty{A} \not \models \e \le t_1 \lor \ldots \lor \,t_n$  (see Example~\ref{e:abelian}). So, by Theorem~\ref{t:abelian}, (a) fails if and only if $\vty{A}b \models \e \eq  t_1^{\la_1}  \cdots t_n^{\la_n}$ for some $\lambda_1,\ldots,\lambda_n \in \N$ not all $0$, which is in turn equivalent to (b).
\end{remark}

Theorem~\ref{t:abelian} can be used to establish soundness and completeness for the hypersequent calculus $\lgc{GA}$ presented in Figure~\ref{fig:ga}.

\begin{theorem}
For any non-empty hypersequent $\mg$, $\vty{A} \models \mg$ if and only if $\der{GA} \mg$.
\end{theorem}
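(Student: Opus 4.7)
My plan is to prove soundness and completeness separately, with Theorem~\ref{t:abelian} supplying the bridge between validity in $\vty{A}$ and the combinatorial/group-theoretic content that the calculus reflects.

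For soundness ($\der{GA} \mg \Rightarrow \vty{A} \models \mg$), I would proceed by induction on a $\lgc{GA}$-derivation and check that each rule of Figure~\ref{fig:ga} preserves validity in $\vty{A}$. Under the identification of a hypersequent with the join of the group terms associated to its component sequents, the structural hypersequent rules (external exchange, external contraction, external weakening if present) are sound because join in $\vty{A}$ is commutative, idempotent, and bounded below by any summand; the identity-style axioms (for the empty sequent, or for a literal together with its inverse) correspond to the facts $\vty{A} \models \e \le \e$ and $\vty{A} \models \e \le x \lor \iv{x}$; internal exchange within a sequent is sound because the underlying group of an abelian $\ell$-group is commutative; and the split rule that decomposes $\Ga,\PI$ into two components $\Ga \h \PI$ reflects the implication $\vty{A} \models \e \le uv \lor t \Rightarrow \vty{A} \models \e \le u \lor v \lor t$ already used at the end of the proof of (4)$\Rightarrow$(1) in Theorem~\ref{t:abelian}.

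For completeness ($\vty{A} \models \mg \Rightarrow \der{GA} \mg$), I would exploit the equivalence (1)$\Leftrightarrow$(4) of Theorem~\ref{t:abelian}. Writing $\mg = \Ga_1 \h \dots \h \Ga_n$ and letting $t_i$ denote the group term of $\Ga_i$, the assumption $\vty{A} \models \mg$ supplies $\la_1,\dots,\la_n \in \N$, not all zero, with $\e = t_1^{\la_1}\cdots t_n^{\la_n}$ in the free abelian group $\m{F}_{\vty{A}b}(k)$. I would then build a derivation bottom-up as follows. Start from the ``long'' sequent
\[
\Si \; := \; \underbrace{\Ga_1,\dots,\Ga_1}_{\la_1},\,\dots\,,\underbrace{\Ga_n,\dots,\Ga_n}_{\la_n}.
\]
Because $\Si \eq \e$ holds in the free abelian group on its literals, $\Si$ reduces to the empty sequent by a sequence of internal exchanges together with cancellations of adjacent $x, \iv{x}$ pairs, so $\Si$ is $\lgc{GA}$-derivable from the axioms. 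Applying the split rule $(\la_1 + \dots + \la_n) - 1$ times then yields the hypersequent containing $\la_i$ copies of each $\Ga_i$, and a final application of external contraction collapses these repetitions to~$\mg$.

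The main obstacle will be the completeness half, specifically the preparatory lemma asserting that every group-valid sequent is $\lgc{GA}$-derivable using only the axioms, internal exchange, and cancellation. This amounts to a small but genuine syntactic claim that the ``group fragment'' of $\lgc{GA}$ is complete for equality in the free abelian group on the literals, and it is what allows the algebraic witness $\e = \prod_i t_i^{\la_i}$ to be promoted to an actual derivation; conceptually this mirrors the direction (4)$\Rightarrow$(1) of Theorem~\ref{t:abelian}, with semantic consequence replaced throughout by $\lgc{GA}$-derivability.
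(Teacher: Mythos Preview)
Your completeness argument is essentially the paper's: obtain the exponents $\la_i$ from Theorem~\ref{t:abelian}, derive the long sequent $\Si$, and split. Two small corrections to align with the actual calculus: hypersequents in this paper are finite \emph{sets} of sequents, so the repetitions produced by $\splitr$ collapse automatically and no external contraction step is needed; and there is no cancellation rule in $\lgc{GA}$ --- you derive $\Si$ by using $\exr$ to permute it into the form $\De,\ov{\De}$ (possible precisely because each variable occurs as often as its inverse) and then apply $\idr$ once.

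For soundness the paper takes a slightly different path: rather than verifying that each rule preserves validity in $\vty{A}$, it shows by induction on derivation height that $\der{GA} \Ga_1 \h \ldots \h \Ga_n$ implies condition (4) of Theorem~\ref{t:abelian}, i.e., the existence of $\la_1,\ldots,\la_n \in \N$ not all zero with $\vty{A}b \models \e \eq \Ga_1^{\la_1}\cdots\Ga_n^{\la_n}$, and then invokes (4)$\Rightarrow$(1). Your direct semantic check of the rules is equally valid; the paper's route has the minor advantage that it avoids re-proving the soundness of $\splitr$ (which is exactly the implication used in (4)$\Rightarrow$(1)) and keeps both directions phrased uniformly through Theorem~\ref{t:abelian}.
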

\begin{proof}
By Theorem~\ref{t:abelian}, $\vty{A} \models \Ga_1 \h \ldots \h \Ga_n$ if and only if $\vty{A}b \models \e \eq \Ga_1^{\la_1}  \cdots \Ga_n^{\la_n}$ for some $\lambda_1,\ldots,\lambda_n \in \N$ not all $0$. But if this latter condition holds, then the number of occurrences of a variable $x$ in $\Ga_1^{\la_1}  \cdots \Ga_n^{\la_n}$ must equal the number of occurrences of $\iv{x}$, and, using $\exr$ and $\idr$, we obtain $\der{GA} \Ga_1^{\la_1}  \cdots \Ga_n^{\la_n}$. Hence also, using $\splitr$ repeatedly, $\der{GA} \Ga_1 \h \ldots \h \Ga_n$. Conversely, we can prove by induction on the height of a derivation that whenever $\der{GA} \Ga_1 \h \ldots \h \Ga_n$, there exist $\lambda_1,\ldots,\lambda_n \in \N$ not all $0$ such that  $\vty{A}b \models \e \eq \Ga_1^{\la_1}  \cdots \Ga_n^{\la_n}$. The cases for $\idr$ and $\exr$ are immediate, and the case of $\splitr$ follows directly by an application of the induction hypothesis.  \qed
\end{proof}

\begin{figure}[t]
 \[
 \begin{array}{ccccccccc}
 \infer[\idr]{\mg \h \De,\ov{\De}}{} & 
 \qquad &
\infer[{\exr}]{\mg \h \PI, \Ga, \De}{\mg \h \PI, \De, \Ga} &
 \qquad & 
 \infer[\splitr]{\mg \h \Ga \h \De}{\mg \h  \Ga, \De} 
\end{array}
\]
\caption{The hypersequent calculus $\lgc{GA}$} \label{fig:ga}
\end{figure}

\begin{remark}
The calculus for abelian $\ell$-groups presented in~\cite{MOG05} uses hypersequents defined as finite multisets of two-sided sequents, each consisting of an ordered pair of finite multisets of $\ell$-group terms, and therefore requires a quite different set of rules. In particular, this calculus contains rules for operation symbols and external contraction and weakening structural rules, but not the exchange rule $\exr$. These differences are of an essentially cosmetic nature, however. We can easily add sound and invertible rules for the operation symbols $\cdot$, $\e$, $\land$, and $\lor$ to the calculus $\lgc{GA}$ that serve to rewrite hypersequents of arbitrary terms into hypersequents built only from literals, and it remains then simply to translate two-sided sequents $\Ga \seq \De$ into one-sided sequents $\ov{\Ga},\De$.  
\end{remark}


\section{Right Orders on Free Groups and Validity in $\ell$-groups}

Let $\vty{G}$ be the variety of groups and $\m{F}(k)$ the free group over $k$ generators, which, as before, we may identify with $\m{T}(k) / \Th_\vty{G}$, where $\Th_\vty{G}$ is the congruence on  $\m{T}(k)$ defined by $s \Th_\vty{G} t \ \Leftrightarrow \ \vty{G} \models s \eq t$. An element of $F(k)$ can again be represented by a term from $T(k)$: in particular, by a reduced term obtained by cancelling all occurrences of $x\iv{x}$ and $\iv{x}x$. Our first aim in this section will be to show that checking validity of equations in $\ell$-groups is  equivalent to checking whether finite subsets of $F(k)$ extend to right orders on $\m{F}(k)$. 

\begin{theorem}\label{t:main}
The following are equivalent for $t_1,\ldots,t_n \in T(k)$:

\begin{itemize}

\item[\rm (1)]		$\vty{LG} \models \e \le t_1 \lor \ldots \lor t_n$.\smallskip

\item[\rm (2)]		$\{t_1,\ldots,t_n\}$ does not extend to a right order of $\m{F}(k)$.\smallskip

\item[\rm (3)]		There exist $s_1,\ldots,s_m \in F(k) \setminus \! \{\e\}$ such that
					\[
					\e \in \sgr{\{t_1,\ldots,t_n,s_1^{\de_1},\ldots,s_m^{\de_m}\}} \
					 \mbox{ for all }\de_1,\ldots,\de_m \in \{-1,1\}.
					\]
				
\end{itemize}

\end{theorem}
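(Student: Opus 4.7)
The plan is to prove the cycle $(1) \Rightarrow (2) \Rightarrow (3) \Rightarrow (1)$, in the spirit of Theorem~\ref{t:abelian} but with right orders replacing orders and with Holland's representation together with the Kopytov--Medvedev extension theorem playing the roles of Fuchs' theorem. For $(1) \Rightarrow (2)$, I argue by contraposition. If $\{t_1, \ldots, t_n\}$ extends to a right order $<$ of $\m{F}(k)$, then each $t_i > \e$, so $\iv{t_i} < \e$. The right regular representation $\varphi \colon \m{F}(k) \to \m{Aut}(\langle F(k), <\rangle)$ given by $\varphi(g)(h) = h\iv{g}$ is a group embedding into an $\ell$-group in $\vty{LG}$, its order preservation being exactly the right-invariance of $<$. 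Evaluating at $h = \e$ gives $\varphi(t_i)(\e) = \iv{t_i} < \e$, and since $<$ is total the finite join satisfies $(\bigvee_i \varphi(t_i))(\e) = \max_i \iv{t_i} < \e$, so $\bigvee_i \varphi(t_i) \not\ge \e$, which refutes $\vty{LG} \models \e \le T$ under the valuation $x_j \mapsto \varphi(x_j)$.

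For $(2) \Rightarrow (3)$ I invoke the Kopytov--Medvedev theorem~\cite{KM94}: a subset $S$ of a group extends to a total right order iff for every finite $\{s_1, \ldots, s_m\} \subseteq G \setminus \{\e\}$ there exist $\de_j \in \{-1,1\}$ with $\e \notin \sgr{S \cup \{s_1^{\de_1}, \ldots, s_m^{\de_m}\}}$. The contrapositive, taken with $S = \{t_1, \ldots, t_n\}$ in $\m{F}(k)$, is exactly (3).

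For $(3) \Rightarrow (1)$ I argue semantically. Suppose $\vty{LG} \not\models \e \le T$; by Holland's theorem there is a valuation $v$ into some $\m{Aut}(\langle \Om, \le\rangle)$ and a point $\omega \in \Om$ with $v(t_i)(\omega) < \omega$ for all $i$. For each $j$ the order-preserving bijection $v(s_j)$ satisfies exactly one of $v(s_j)(\omega) < \omega$, $v(s_j)(\omega) > \omega$, or $v(s_j)(\omega) = \omega$; pick $\epsilon_j \in \{-1,1\}$ so that $v(s_j^{\epsilon_j})(\omega) \le \omega$. For $\de = (\epsilon_1, \ldots, \epsilon_m)$, condition (3) produces a word $w_\de = \alpha_1 \cdots \alpha_\ell = \e$ in $\m{F}(k)$ whose letters lie in $\{t_1, \ldots, t_n, s_1^{\epsilon_1}, \ldots, s_m^{\epsilon_m}\}$. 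Defining the backward trajectory $\omega_\ell := \omega$, $\omega_{p-1} := v(\alpha_p)(\omega_p)$, strict order preservation of each $v(\alpha_p)$ yields a downward induction showing $\omega_p \le \omega$ throughout, and that the strict inequality $\omega_p < \omega$ arises and propagates as soon as some $\alpha_p = t_i$ is encountered. This gives $\omega_0 < \omega$, contradicting $\omega_0 = v(w_\de)(\omega) = \omega$.

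The main obstacle is a degenerate subcase of $(3) \Rightarrow (1)$: if the witnessing word $w_\de$ uses no $t_i$, then $\e \in \sgr{\{s_1^{\epsilon_1}, \ldots, s_m^{\epsilon_m}\}}$ already in $\m{F}(k)$, and the trajectory argument may stay at $\omega$ without ever producing a strict descent. To rule this out I would either choose $\omega$ more carefully -- for instance by a Baire category argument to ensure that each non-trivial $v(s_j)$ moves $\omega$, so that $\epsilon_j$ is uniquely forced -- or assume by Zorn that $s_1, \ldots, s_m$ is a minimal witness for (3), so that no proper subset of it already generates $\e$ in the free group. Either refinement forces some $t_i$ to appear in every $w_\de$ compatible with the chosen $\omega$, closing the argument.
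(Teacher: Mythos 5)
Your directions (1)$\Rightarrow$(2) and (2)$\Rightarrow$(3) are correct and essentially the paper's: the paper likewise refutes validity in $\m{Aut}(\langle F(k),\le\rangle)$ via translation maps (it uses the inverse order and $s\mapsto st$ where you use $h\mapsto h\iv{g}$, which comes to the same thing), and likewise reads (2)$\Leftrightarrow$(3) directly off Theorem~\ref{t:orderinggroups}. For (3)$\Rightarrow$(1) you depart from the paper, which proves this step syntactically: from condition (3) it derives $t_1\h\ldots\h t_n$ in $\lgc{GLG^*}$, transfers the derivation to $\lgc{GLG}$ via the cut-admissibility argument of Lemma~\ref{l:calculiequivalent}, and then gets validity by an induction on $\lgc{GLG}$-derivations. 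Your semantic trajectory argument is an attractive alternative, but as written it has a genuine gap, and it is exactly the one you flag yourself: the degenerate case in which the witnessing word $w_\de$ for the sign vector determined by $\omega$ uses no $t_i$.

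Neither of your proposed repairs closes the gap. The Baire-category repair cannot work in general because the valuation $v$ need not be faithful on $\m{F}(k)$: if, say, some $s_j$ is a commutator and the countermodel factors through an abelian $\ell$-group, then $v(s_j)$ is the identity automorphism and moves no point at all, so no choice of $\omega$ forces a sign for $s_j$. The minimality repair also fails: minimality of $\{s_1,\ldots,s_m\}$ as a witness for (3) constrains which proper subsets witness (3), but does not prevent $\e\in\sgr{\{s_j^{\epsilon_j}: j\in J\}}$ for the \emph{particular} sign vector $\epsilon$ that $\omega$ hands you (already $s_1=x$, $s_2=y$, $s_3=\iv{y}\,\iv{x}$ satisfy $s_1s_2s_3=\e$ for one sign pattern, and nothing in minimality rules out that this is the pattern you are forced into). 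Tracing your own induction shows that the only problematic letters are those $s_j$ whose image fixes $\omega$; for these both signs are admissible, and what is needed is to choose their signs coherently so that no identity product forms among them alone --- which amounts to invoking a right order on $\m{F}(k)$ (equivalently, Theorem~\ref{t:orderinggroups} with $S=\emptyset$ plus the right-orderability of free groups) to break the ties. That is a genuine extra ingredient, not a refinement of either of your suggestions, and it is precisely the kind of import that the paper's syntactic detour through $\lgc{GLG^*}$ and $\lgc{GLG}$ is designed to avoid.
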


Observe that the equivalence of (2) and (3) is an immediate consequence of the following ordering theorem for groups.

\begin{theorem}[Kopytov and Medvedev 1994~\cite{KM94}] \label{t:orderinggroups}
A subset $S$ of a group $\m{G}$ extends to a right order of $\m{G}$ if and only if for all $a_1,\ldots,a_m \in G \setminus\! \{\e\}$, there exist $\de_1,\ldots,\de_m \in \{-1,1\}$ such that $\e \not \in \sgr{S \cup \{a_1^{\de_1},\ldots,a_m^{\de_m}\}}$.
\end{theorem}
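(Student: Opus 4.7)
The plan is a Zorn's lemma argument on the family of subsemigroups of $\m{G}$ satisfying the right-hand condition. Let me call a subsemigroup $T$ of $\m{G}$ with $S \subseteq T$ \emph{extendible} if for every $a_1,\ldots,a_m \in G \setminus \{\e\}$ there exist $\de_1,\ldots,\de_m \in \{-1,1\}$ with $\e \notin \sgr{T \cup \{a_1^{\de_1},\ldots,a_m^{\de_m}\}}$. Taking $m=0$ shows that every extendible subsemigroup omits $\e$, hence is the positive cone of a partial right order of $\m{G}$ extending $S$.

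The forward direction is immediate: if $S$ lies in the positive cone $P$ of a right order, then by totality, for each $a_i$ choose $\de_i \in \{-1,1\}$ with $a_i^{\de_i} \in P$; the entire generating set lies in $P$, so $\sgr{S \cup \{a_1^{\de_1},\ldots,a_m^{\de_m}\}} \subseteq P$ omits $\e$.

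For the converse, let $\mathcal{F}$ be the collection of extendible subsemigroups containing $S$, ordered by inclusion. It contains $\sgr{S}$ by the hypothesis on $S$, so it is non-empty. For any chain $\{T_\alpha\}$ in $\mathcal{F}$, its union $T$ is a subsemigroup containing $S$; if $T$ failed to be extendible for some $a_1,\ldots,a_m$, then for each of the $2^m$ sign choices the membership $\e \in \sgr{T \cup \{a_1^{\de_1},\ldots,a_m^{\de_m}\}}$ would be witnessed by only finitely many elements of $T$, all of which lie in a single $T_\alpha$ (the maximum of finitely many chain indices), contradicting the extendibility of $T_\alpha$. Hence Zorn's lemma delivers a maximal $P \in \mathcal{F}$.

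It remains to show $G = P \cup P^{-1} \cup \{\e\}$, from which $P \cap P^{-1} = \emptyset$ follows automatically since $P$ is a subsemigroup omitting $\e$. Suppose toward contradiction that $a \in G \setminus \{\e\}$ satisfies $a,a^{-1} \notin P$, and consider the proper supersemigroups $T_+ = \sgr{P \cup \{a\}}$ and $T_- = \sgr{P \cup \{a^{-1}\}}$. I claim at least one of them is extendible, contradicting maximality of $P$. If both failed, there would exist witnessing tuples $b_1,\ldots,b_p$ for $T_+$ and $c_1,\ldots,c_q$ for $T_-$; applying extendibility of $P$ to the combined list $a,b_1,\ldots,b_p,c_1,\ldots,c_q$ yields signs under which the resulting subsemigroup omits $\e$, and depending on whether the sign assigned to $a$ is $1$ or $-1$ this contradicts the alleged failure of $T_+$ or of $T_-$ (since enlarging a generating set cannot eject $\e$). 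The main delicate point, as expected for such extension arguments, is this sign-juggling in the maximality step: one cannot naively extend $P$ by $a$ or $a^{-1}$ and hope to preserve the extension property, but must merge the hypothetical obstructions for $T_+$ and $T_-$ into a single application of the hypothesis for $P$.
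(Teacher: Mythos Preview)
The paper does not prove this theorem: it is quoted from \cite{KM94} as a known ordering theorem and used as a black box to obtain the equivalence of conditions (2) and (3) in Theorem~\ref{t:main}. So there is no proof in the paper to compare against.

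Your argument is correct and is essentially the standard proof one finds in the ordered-groups literature. The forward direction is immediate, and for the converse the Zorn's lemma setup is the natural one; the only nontrivial step is exactly the one you flag, namely showing that a maximal extendible $P$ is total. Your ``merge the obstructions'' trick is right: if neither $\sgr{P \cup \{a\}}$ nor $\sgr{P \cup \{a^{-1}\}}$ were extendible, witnessed by tuples $b_1,\ldots,b_p$ and $c_1,\ldots,c_q$ respectively, then applying extendibility of $P$ to the concatenated list $a,b_1,\ldots,b_p,c_1,\ldots,c_q$ produces a sign assignment that, according to the sign given to $a$, contradicts one of the two assumed failures (using that $\sgr{P \cup \{a^{\de}\} \cup \{b_i^{\epsilon_i}\}}$ is contained in the larger semigroup generated by the full signed list). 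The chain step is also fine, since a putative failure of extendibility for $\bigcup_\alpha T_\alpha$ is witnessed by finitely many products, hence by finitely many generators, hence already inside some $T_\alpha$.

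Two minor remarks. First, you silently use $\sgr{\sgr{S} \cup X} = \sgr{S \cup X}$ to transfer the hypothesis on $S$ to $\sgr{S}$; this is trivially true but worth a word. Second, the sentence ``$P \cap P^{-1} = \emptyset$ follows automatically since $P$ is a subsemigroup omitting $\e$'' is correct (if $a,a^{-1}\in P$ then $\e = aa^{-1}\in P$), but note that this only gives antisymmetry; the fact that $\le^P$ is a right order in the sense of the paper is already contained in your earlier observation that extendible subsemigroups omit $\e$.
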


\noindent
Condition (3) corresponds directly to derivability in the hypersequent calculus $\lgc{GLG^*}$ presented in Figure~\ref{fig:glstar}.  It is not so easy, however, to show directly that the calculus $\lgc{GLG^*}$ is sound with respect to $\ell$-groups (i.e., to show that $\der{GLG^*} \mg$ implies $\vty{LG} \models \mg$), since the rule $\starr$ is not valid as an implication between premises and conclusion in all $\ell$-groups. We therefore consider also a further hypersequent calculus $\lgc{GLG}$, displayed in Figure~\ref{fig:gl}, and establish the following relationship between the calculi.

\begin{figure}[t]
\[
\begin{array}{ccccc}
\infer[\gvr]{\mg \h \Ga}{} & \qquad & \infer[\splitr]{\mg \h \Ga \h \De}{\mg \h  \Ga, \De} & \qquad & \infer[\starr]{\mg}{\mg \h \De & \mg \h \ov{\De}}\smallskip \\
\mbox{$\Ga$ group valid} & & & & \mbox{$\De$ not group valid.} 
\end{array} 
\]
\caption{The hypersequent calculus $\lgc{GLG}^*$} \label{fig:glstar}
\end{figure}

\begin{figure}[t]
\[
\begin{array}{ccccc}
\infer[\gvr]{\mg \h \Ga}{} & \qquad & \infer[\emr]{\mg \h \De \h \ov{\De}}{} & \qquad &  \infer[{\cutr}]{\mg \h \Ga, \Si}{\mg \h \Ga, \De & \mg \h \ov{\De}, \Si} \smallskip \\
\mbox{$\Ga$ group valid}
\end{array} 
\]
\caption{The hypersequent calculus $\lgc{GLG}$} \label{fig:gl}
\end{figure}

\begin{lemma}\label{l:calculiequivalent}
For any non-empty hypersequent $\mg$, if $\der{GLG^*} \mg$, then $\der{GLG} \mg$.
\end{lemma}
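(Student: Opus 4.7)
I would proceed by induction on the height of the given $\lgc{GLG^*}$-derivation of $\mg$. The base case is the axiom $\gvr$, which is shared between the two calculi, so there is nothing to show.

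For the inductive step, consider the last rule of the derivation. The $\splitr$ case reduces to showing that $\splitr$ is $\lgc{GLG}$-admissible. As a preliminary, I would verify that \emph{external weakening}---the rule allowing one to adjoin an arbitrary sequent as a new component of a derivable hypersequent---is itself $\lgc{GLG}$-admissible, by a routine induction on derivations: each of $\gvr$, $\emr$, and $\cutr$ commutes with adding a new component to its context. Given $\der{GLG} \mh \h \Ga, \De$ from the induction hypothesis, external weakening yields $\der{GLG} \mh \h \De \h \Ga, \De$, while $\emr$ gives $\der{GLG} \mh \h \De \h \ov{\De}$; a single $\cutr$ on $\De$ (with ambient context $\mh \h \De$, with $\Ga$ on the left and the empty sequent on the right) then produces $\der{GLG} \mh \h \De \h \Ga = \mh \h \Ga \h \De$.

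The main obstacle is the $\starr$ case, where the induction hypothesis gives $\der{GLG} \mg \h \De$ and $\der{GLG} \mg \h \ov{\De}$ with the side condition that $\De$ is not group valid, and we must derive $\der{GLG} \mg$. A naive application of $\cutr$ on $\De$ against $\ov{\De}$ yields only $\der{GLG} \mg \h \emptyset$, which introduces a spurious empty-sequent component and cannot in general be strengthened to $\mg$---this reflects the fact that $\starr$ is not locally valid in $\ell$-groups. My plan is a nested induction on the height of the $\lgc{GLG}$-derivation of $\mg \h \De$, in which we track the distinguished component $\De$. If the last rule of this inner derivation is $\gvr$, then either its principal sequent is $\De$ (contradicting the side condition) or it is some other sequent, in which case $\mg$ itself is already a $\gvr$-instance. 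If the last rule is $\emr$, then either $\De$ matches one of the two complementary components---forcing $\ov{\De}$ to occur already as a component of $\mg$, so that $\mg \h \ov{\De}$ coincides with $\mg$ set-theoretically and the result follows from the other given premise---or $\De$ lies in the ambient context, in which case $\mg$ is itself an $\emr$-instance. Finally, if $\De$ is inherited non-principally through a $\cutr$ step, we permute the outer ``virtual cut'' with this $\cutr$ by applying $\cutr$ against $\der{GLG} \mg \h \ov{\De}$ to the subderivations, thereby reducing to strictly shorter inner derivations and invoking the nested induction hypothesis. Verifying that this cut-elimination-style permutation terminates and delivers a bona fide $\lgc{GLG}$-derivation of $\mg$ is the technical heart of the proof.
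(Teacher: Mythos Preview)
Your treatment of $\splitr$ matches the paper's exactly, and your case analysis for the $\starr$ step is the right shape, but there is a genuine gap in the $\cutr$ case of the inner induction. You only treat the situation where the tracked component $\De$ sits in the ambient context of the last $\cutr$ (``inherited non-principally''). You do not treat the case where $\De$ is the \emph{principal} sequent of that cut, i.e., where the last step has conclusion $\mg' \h \Ga,\Si$ with $\De = \Ga,\Si$ and premises $\mg' \h \Ga,\De'$ and $\mg' \h \ov{\De'},\Si$. Here $\De$ does not occur in either premise, so your inner induction hypothesis is unavailable, and a direct cut against the second global assumption $\der{GLG} \mg \h \ov{\De}$ just reproduces hypersequents you already had rather than shrinking the derivation.

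The paper's proof addresses precisely this obstacle by introducing an auxiliary \emph{pointed} calculus $\lgc{GLG^p}$, in which one sequent $\PI$ is marked and $\cutr$ is syntactically forbidden from acting on it. The substantial intermediate Claim is that $\der{GLG} \mg \h \PI$ holds iff $\der{GLG^p} \langle \PI, \mg \rangle$ holds; proving the left-to-right direction is itself a cut-permutation argument (inducting on the sum of heights of two pointed derivations) that shows any cut touching the marked sequent can be absorbed. Only once this Claim is established does the inner induction you sketched go through, because one may then assume without loss of generality that $\De$ is never principal in a $\cutr$. In short, your proposal is missing the lemma that lets you dismiss the principal-$\De$ case; the pointed-hypersequent detour (or some equivalent device) is the technical heart you still need to supply.
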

\begin{proof}
It suffices to show that the rules $\splitr$ and $\starr$ of $\lgc{GLG^*}$ are $\lgc{GLG}$-admissible. First, it is easily shown, by an induction on the height of a derivation, that the following rule is $\lgc{GLG}$-admissible:
\[
\infer[\ewr]{\mg \h \mh}{\mg}
\]
 Now for $\splitr$, if $\der{GLG} \mg \h \Ga, \De$, then, by $\ewr$, we obtain $\der{GLG} \mg  \h \Ga, \De \h \De$. But also, by $\emr$, $\der{GLG} \mg  \h \ov{\De} \h \De$, so, by $\cutr$, we obtain $\der{GLG} \mg \h \Ga \h \De$.

To show that $\starr$ is admissible in $\lgc{GLG}$, we consider a restricted version of the calculus where $\cutr$ is never applied to some particular sequent. For a hypersequent $\mg$ and a sequent $\PI$, we call the ordered pair $\langle \PI, \mg \rangle$ a \emph{pointed hypersequent} (just a hypersequent with one sequent marked) and transfer the usual definitions for hypersequent calculi to pointed hypersequent calculi. We let the pointed hypersequent calculus $\lgc{GLG^{p}}$ consist of all pointed hypersequents $\langle \PI, \mg \rangle$ such that either some $\Ga \in \mg \cup \{\PI\}$ is group valid or there exist $\De$ and $\ov{\De}$ in $\mg \cup \{\PI\}$, together with the restricted cut rule
\[
\infer[{\cutr}]{\langle \PI, (\mg \h \Ga, \Si) \rangle}{\langle \PI, (\mg \h \Ga, \De) \rangle & \langle \PI, (\mg \h \ov{\De}, \Si) \rangle}
\]
{\em Claim.}  $\der{GLG} \mg\h \PI$ if and only if $\der{GLG^p} \langle \PI, \mg \rangle$.

\smallskip

\noindent
{\em Proof of Claim.} The right-to-left direction is a simple induction on the height of a derivation of $\langle \PI, \mg \rangle$ in  $\lgc{GLG^{p}}$. For the left-to-right direction, we first note that (by a straightforward induction) whenever $\der{GLG^p} \langle \PI, \mg \rangle$, also $\der{GLG^p} \langle \PI, \mg \hh \mh \rangle$ and $\der{GLG^p} \langle \De, \mg \hh \PI \rangle$. It suffices now to prove that
\[
\der{GLG^{p}} \langle (\Ga, \De), \mg \rangle \ \mbox{ and } \ \der{GLG^{p}} \langle (\ov{\De}, \Si), \mh \rangle 
\quad \Longrightarrow \quad \der{GLG^{p}} \langle (\Ga, \Si), \mg \hh \mh \rangle.
\]
 We proceed by induction on the sum of heights of derivations for $\der{GLG^{p}} \langle (\Ga, \De), \mg \rangle$ and $\der{GLG^{p}} \langle (\ov{\De}, \Si), \mh \rangle$. 
 
 For the base case, there are several possibilities. If $\mg$ or $\mh$ contains a group valid sequent or both $\PI$ and $\ov{\PI}$, then the conclusion follows trivially. If $\Ga, \De$ and $\ov{\De}, \Si$ are both group valid, then $\Ga,\Si$ is group valid and so $\der{GLG^{p}} \langle (\Ga, \Si), \mg \hh \mh \rangle$. Suppose then that $\mg = \mg' \hh \ov{\De}, \ov{\Ga}$, that is, $\der{GLG^{p}} \langle (\Ga, \De), \mg' \hh \ov{\De}, \ov{\Ga} \rangle$. Observe that
\[
\der{GLG^{p}} \langle (\Ga,\Si), \mg' \hh \mh \hh \ov{\De},\Si \rangle
\quad \mbox{and} \quad
\der{GLG^{p}} \langle (\Ga,\Si), \mg' \hh \mh \hh \ov{\Si},\ov{\Ga} \rangle.
\] 
Hence, by $\cutr$, we get $\der{GLG^{p}} \langle (\Ga, \Si), \mg' \hh \mh \hh \ov{\De}, \ov{\Ga} \rangle$; that is, $\der{GLG^{p}} \langle (\Ga, \Si), \mg \hh \mh \rangle$ as required. The case where $\mh = \mh' \hh \ov{\Si}, \De$ is symmetrical.

For the induction step, we apply the induction hypothesis twice to the premises of an application of $\cutr$, and the result follows  by applying $\cutr$.  \qed
\medskip

\noindent
Now to prove that $\starr$ is admissible in $\lgc{GLG^{p}}$, it suffices by the claim to show that for $\De$ not group valid,
\[
\der{{GLG}^{p}} \langle \De, \mg \rangle \ \mbox{ and } \ \der{{GLG}^{p}} \langle \ov{\De}, \mh \rangle 
\quad \Longrightarrow \quad \der{GLG} \mg \h \mh.
\]
We proceed by induction on the height of a ${\lgc{GLG^{p}}}$-derivation of $\langle \De, \mg \rangle$. For the base case, there are several possibilities. If $\mg$ contains a group valid sequent or both $\PI$ and $\ov{\PI}$, then the conclusion follows trivially. Suppose  that $\langle \De, \mg \rangle$ has the form $\langle \De,\mg ' \hh \ov{\De} \rangle$. Since $\der{GLG^{p}} \langle \ov{\De}, \mh \rangle$, also $\der{GLG} \mh \h \ov{\De} \h \mg'$, i.e., $\der{GLG} \mg \h \mh$. For the induction step, suppose that $\mg = \mg' \h \Ga,\Si$ and that $\langle \De, \mg \rangle$ is the conclusion of an application of $\cutr$ with premises $\langle \De,\mg' \hh \Ga,\PI \rangle$ and $\langle \De, \mg' \hh \ov{\PI}, \Si \rangle$. By the induction hypothesis twice, $\der{GLG} \mg' \h \Ga, \PI \h \mh$ and $\der{GLG} \mg' \h \ov{\PI}, \Si \h \mh$. Hence, by $\cutr$, we obtain $\der{GLG} \mg' \h \Ga, \Si \h \mh$; that is, $\der{GLG} \mg \h \mh$. \qed
\end{proof}

\noindent
We now have all the ingredients required to complete the proof of Theorem~\ref{t:main}.

\medskip

\noindent
{\em Proof of Theorem~\ref{t:main}}. 

\smallskip

\noindent
(1) $\Rightarrow$ (2). Suppose contrapositively that $\{t_1,\ldots,t_n\}$  extends to a right order of $\m{F}(k)$. Then the inverse order is a right order $\le$ of $\m{F}(k)$ where $t_1,\ldots,t_n$ are negative. Consider the $\ell$-group ${\m{Aut}}(\langle F(k),\le \rangle)$ and evaluate each variable $x$ by the map $s \mapsto sx$. Then each group term $t$ is evaluated by the map $s \mapsto st$. In particular, each $t_i$ maps $\e$ to $t_i < \e$, and hence $t_1 \lor \ldots \lor t_n$ maps $\e$ to some $t_j < \e$, where $j\in\{1,\dots,n\}$. That is, $\e \not \le t_1 \lor \ldots \lor t_n$ in  ${\m{Aut}}(\langle F(k),\le \rangle)$ and we obtain $\vty{LG} \not \models \e \le t_1 \lor \ldots \lor t_n$.

\smallskip

\noindent
(2) $\Rightarrow$ (3). Immediate from Theorem~\ref{t:orderinggroups}.

\smallskip

\noindent
(3) $\Rightarrow$ (1). Consider $s_1,\ldots,s_m \in F(k) \setminus \! \{\e\}$ where $\e \in \sgr{\{t_1,\ldots,t_n,s_1^{\de_1},\ldots,s_m^{\de_m}\}}$ for all $\de_1,\ldots,\de_m \in \{-1,1\}$. We prove first that $\der{GLG^*} t_1 \h \ldots \h t_n$. For each particular choice of $\de_1,\ldots,\de_m \in \{-1,1\}$, there exist $\lambda_1,\ldots,\lambda_n,\mu_1,\ldots,\mu_m \in \N$ not all $0$ such that $\e = t_1^{\lambda_1} \cdot \ldots \cdot t_n^{\lambda_n} \cdot (s_1^{\delta_1})^{\mu_1} \cdot \ldots \cdot (s_m^{\delta_m})^{\mu_m}$ in $\m{F}(k)$. Hence $\vty{G} \models  \e \eq t_1^{\lambda_1} \cdot \dots \cdot t_n^{\lambda_n} \cdot (s_1^{\delta_1})^{\mu_1} \cdot \dots \cdot (s_m^{\delta_m})^{\mu_m}$ and, by $\gvr$,  $\der{GLG^*} t_1^{\lambda_1} \cdot \dots \cdot t_n^{\lambda_n} \cdot (s_1^{\delta_1})^{\mu_1} \cdot \dots \cdot (s_m^{\delta_m})^{\mu_m}$. But using $\splitr$ repeatedly,  $\der{GLG^*} t_1 \h \ldots \h t_n \h s_1^{\delta_1} \h \ldots \h s_m^{\delta_m}$. So, by applying $\starr$ iteratively, $\der{GLG^*} t_1 \h \ldots \h t_n$. It follows now by Lemma~\ref{l:calculiequivalent} that $\der{GLG} t_1 \h \ldots \h t_n$. But then a simple induction on the height of a derivation in $\lgc{GLG}$, shows that $\vty{LG} \models \e \le t_1 \lor \ldots \lor t_n$ as required. 
\qed

\medskip

\noindent
Soundness and completeness results for $\lgc{GLG^*}$ and $\lgc{GLG}$ follow directly.

\begin{corollary}
The following are equivalent for any hypersequent $\mg$: 
\[
 {\rm (1)}  \ \vty{LG} \models \mg; \quad {\rm (2)} \ \der{GLG} \mg; \quad {\rm (3)} \ \der{GLG^*} \mg.
\]
\end{corollary}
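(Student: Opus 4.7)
My plan is to close the cycle $(1) \Rightarrow (3) \Rightarrow (2) \Rightarrow (1)$, invoking the already-established Theorem~\ref{t:main} and Lemma~\ref{l:calculiequivalent}, and finishing with a soundness argument for $\lgc{GLG}$. The proof is largely a repackaging of the foregoing results: only the last implication demands genuinely new work. As in the preceding material I tacitly restrict to non-empty hypersequents, since validity is only defined there and no rule of either calculus produces the empty hypersequent.

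For $(1) \Rightarrow (3)$, I would write the hypersequent as $\mg = \Ga_1 \h \ldots \h \Ga_n$ with each $\Ga_i$ a group term in some $T(k)$. By definition, $\vty{LG} \models \mg$ is exactly $\vty{LG} \models \e \le \Ga_1 \lor \ldots \lor \Ga_n$, which is condition~(1) of Theorem~\ref{t:main}. Applying the established equivalence of (1) and~(3) in that theorem yields the combinatorial condition on right orders of $\m{F}(k)$; the construction carried out in the $(3) \Rightarrow (1)$ step of that proof, combining $\gvr$ with repeated $\splitr$ and iterated $\starr$, then directly produces a $\lgc{GLG^*}$-derivation of $\mg$. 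For $(3) \Rightarrow (2)$ I would invoke Lemma~\ref{l:calculiequivalent} as it stands.

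The remaining step $(2) \Rightarrow (1)$ is a soundness proof for $\lgc{GLG}$, which I would establish by induction on the height of a derivation. The axioms are routine: for $\gvr$, group validity of $\Ga$ forces its interpretation to be $\e$ in every $\ell$-group, so $\e \le \mg \lor \Ga$ holds trivially; for $\emr$, the identity $\e \le a \lor \iv{a}$, recalled explicitly in Section~\ref{s:orderedgroups}, applied to the interpretation $a$ of $\De$, gives $\e \le \mg \lor \De \lor \ov{\De}$. The main obstacle is the soundness of $\cutr$: given $\e \le \mg \lor \Ga\De$ and $\e \le \mg \lor \ov{\De}\Si$ in $\vty{LG}$, we must conclude $\e \le \mg \lor \Ga\Si$.

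For this step I plan to invoke Holland's generation result, cited in the excerpt, that $\vty{LG}$ is generated by $\m{Aut}(\langle \R, \le \rangle)$, so that it suffices to verify the implication pointwise in an automorphism $\ell$-group $\m{Aut}(\langle \Om, \le \rangle)$. For each $\omega \in \Om$ with $\mg(\omega) < \omega$ (the opposite case being trivial), the two premises force $\Ga(\De(\omega)) \ge \omega$ and $\ov{\De}(\Si(\omega)) \ge \omega$; since $\ov{\De}$ is interpreted as the inverse of $\De$, the second rearranges to $\De(\omega) \le \Si(\omega)$, and applying the order-preserving map $\Ga$ yields $\Ga(\Si(\omega)) \ge \Ga(\De(\omega)) \ge \omega$. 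Hence $\e \le \mg \lor \Ga\Si$ holds pointwise, completing soundness and closing the cycle.
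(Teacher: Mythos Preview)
Your proof is correct and follows essentially the same route as the paper: the proof of Theorem~\ref{t:main} already establishes the chain $\vty{LG}\models\mg \Rightarrow \der{GLG^*}\mg \Rightarrow \der{GLG}\mg \Rightarrow \vty{LG}\models\mg$, and the corollary is stated as following ``directly'' from this. Your only addition is an explicit soundness argument for $\cutr$ via Holland's generation theorem (which the paper cites but leaves the soundness step as ``a simple induction''); this is a legitimate and natural way to fill in that detail.
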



In the last part of this section, we use Theorem~\ref{t:main} to derive new decision procedures for Problems 1 and 2 (see Section~\ref{s:orderedgroups}). Let us denote the {\em length} of a reduced term $t$ in $F(k)$ by $|t|$, and for $N \in \N$, let $F_N(k)$ denote the set of all elements of $\m{F}(k)$ of length $\le N$. Given a subset $S$ of $\m{F}(k)$ which omits $\e$, we call $S$ an {\em $N$-truncated right order} on $\m{F}(k)$ if $S = \sgr{S} \cap F_N(k)$ and, for all $t \in F_{N-1}(k) \setminus \{\e\}$, either $t \in S$ or $\iv{t} \in S$. It has been shown that this notion precisely  characterizes the finite subsets of $F(k)$ that extend to a right order.

\begin{theorem}[Clay and Smith~\cite{Smi05,CS09}]
A finite subset $S$ of $F(k)$ extends to a right order of $\m{F}(k)$ if and only if $S$ extends to an $N$-truncated right order of $\m{F}(k)$ for some $N \in \N$.
\end{theorem}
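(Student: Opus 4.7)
The forward direction is immediate: given a right order of $\m{F}(k)$ with positive cone $P \supseteq S$, pick any $N \in \N$ with $S \subseteq F_N(k)$ and set $T := P \cap F_N(k)$. Then $T$ contains $S$, omits $\e$, inherits totality on $F_{N-1}(k)\setminus\{\e\}$ from $P$, and satisfies $T = \sgr{T}\cap F_N(k)$ because $\sgr{T}\subseteq P$; so $T$ is an $N$-truncated right order containing $S$.

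For the converse the plan is a K\"onig-style compactness argument. Let $\Om_M$ denote the set of $M$-truncated right orders of $\m{F}(k)$ containing $S$. Each $\Om_M$ is finite, since $\m{F}(k)$ is finitely generated and hence each $F_M(k)$ is finite, and the map $T \mapsto T \cap F_M(k)$ sends $\Om_{M+1}$ into $\Om_M$ by essentially the same verification as in the forward direction. If I can establish $\Om_M \neq \emptyset$ for all sufficiently large $M$, then the $\Om_M$ together with these restriction maps form a finitely branching tree with infinitely many nodes; K\"onig's lemma supplies a coherent chain $T_{N_0} \subseteq T_{N_0+1} \subseteq \cdots$ whose union $P := \bigcup_M T_M$ is a subsemigroup of $\m{F}(k)$ omitting $\e$, total on $F(k)\setminus\{\e\}$, and containing $S$---hence the positive cone of a right order extending $S$.

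The hypothesis furnishes $\Om_{N_0} \neq \emptyset$ for some particular $N_0$, so the main obstacle is to prove the upward step $\Om_M \neq \emptyset \Longrightarrow \Om_{M+1} \neq \emptyset$. Given $T \in \Om_M$, I would first observe that $\sgr{T}$ is a partial right order of $\m{F}(k)$: it omits $\e$ because $\sgr{T} \cap F_M(k) = T$ and $\e \in F_M(k)\setminus T$. The plan is then to enlarge $\sgr{T}$ to a partial right order that is total on $F_M(k)\setminus\{\e\}$, by processing the finite set $F_M(k)\setminus F_{M-1}(k)$ one element $g$ at a time and, at each step, admitting whichever of $g$ and $\iv g$ still leaves the generated subsemigroup $\e$-free; intersecting the result with $F_{M+1}(k)$ then delivers the required member of $\Om_{M+1}$. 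The consistency of the sign choices rests on the one-step extension lemma for partial right orders---for any partial right order $P$ of a group and any $g \neq \e$, at least one of $\sgr{P \cup \{g\}}$ and $\sgr{P \cup \{\iv g\}}$ still omits $\e$---and establishing or citing this classical fact about right-orderable groups is the principal technical ingredient.
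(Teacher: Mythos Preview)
The paper does not prove this theorem; it is quoted from \cite{Smi05,CS09} as input to the decidability discussion, so there is no in-paper argument to compare against.

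Your forward direction is correct. For the converse, the K\"onig scaffolding is reasonable, but the entire weight rests on what you call the ``one-step extension lemma'': that for any partial right order $P$ and any $g\neq\e$, at least one of $\sgr{P\cup\{g\}}$ and $\sgr{P\cup\{\iv g\}}$ omits $\e$. You describe this as a ``classical fact about right-orderable groups,'' but it is not. A standard Zorn argument shows that the one-step lemma, holding for every partial right order of $G$, is equivalent to the assertion that \emph{every} partial right order of $G$ extends to a total right order; contrast this with the genuine classical criterion of Theorem~\ref{t:orderinggroups}, which demands coherent sign choices across arbitrary finite families $a_1,\dots,a_m$ simultaneously, precisely because the one-at-a-time version can fail.

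For free groups the situation is no better: the one-step lemma---even restricted to the partial right orders $\sgr{T}$ with $T$ an $M$-truncated right order---is essentially the theorem you are trying to prove. Indeed, if you had it, the $\Om_M$ and K\"onig apparatus would be superfluous: take any $T\in\Om_{N_0}$, observe that $\e\notin\sgr{T}$, and extend $\sgr{T}$ directly to a total right order by iterating the one-step lemma along an enumeration of $F(k)\setminus\{\e\}$. So the ``principal technical ingredient'' you flag is not citable as stated and is not separable from the result itself; all the difficulty has been deferred to it.
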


\noindent 
The condition described in this theorem can be decided as follows. Let $N$ be the maximal length of an element in $S$. Extend $S$ to the finite set $S^{*}$ by adding $st$ whenever $s,t$ occur in the set constructed so far and $|st| \le N$. This ensures that $S^{*} = \sgr{S^{*}} \cap F_N(k)$. If $\e \in S^{*}$, then stop. Otherwise, for every $t\in F_{N-1}(k) \setminus \{\e\}$ such that $t \not \in S^{*}$ and $\iv{t} \not \in S^*$, add $t$ to $S^{*}$ to obtain $S_1$ and $\iv{t}$ to $S^{*}$ to obtain $S_2$, and repeat the process with these sets. This procedure terminates because $F_N(k)$ is finite. Hence we obtain a decision procedure for Problem 1.

\begin{corollary}\label{c:dec}
The problem of checking whether a given finite set of elements of a finitely generated free group extends to a right order is decidable.
\end{corollary}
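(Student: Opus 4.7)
The plan is to reduce the problem to a bounded finite search using the Clay--Smith theorem. First I would fix $N = \max\{|s| : s \in S\}$ and establish the key equivalence that, for this specific $N$, the set $S$ extends to a right order of $\m{F}(k)$ if and only if $S$ extends to an $N$-truncated right order of $\m{F}(k)$. The backward direction is exactly the Clay--Smith theorem. For the forward direction, given a right order $\le$ of $\m{F}(k)$ with $S \subseteq P_\le$, I would verify that $T := P_\le \cap F_N(k)$ is an $N$-truncated right order extending $S$: containment of $S$ holds because $S \subseteq F_N(k)$; the identity $T = \sgr{T} \cap F_N(k)$ follows from $\sgr{T} \subseteq P_\le$; and for every $t \in F_{N-1}(k)\setminus\{\e\}$, totality of $\le$ together with $|t| = |\iv{t}| \le N-1$ ensures $t \in T$ or $\iv{t} \in T$.

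Next I would describe an explicit terminating search for such a $T$. Starting from $S$, iteratively adjoin $st$ whenever $s,t$ already belong to the current set and $|st| \le N$, producing a finite set $S^* \subseteq F_N(k)$ satisfying $S^* = \sgr{S^*} \cap F_N(k)$; termination is immediate since $F_N(k)$ is finite. If $\e \in S^*$, this branch fails. Otherwise, pick some $t \in F_{N-1}(k)\setminus\{\e\}$ with $t, \iv{t} \notin S^*$ and branch into two subproblems by adjoining either $t$ or $\iv{t}$ to $S^*$ and recursing. The recursion tree is finite, since every branching step strictly enlarges $S^*$ within the fixed finite universe $F_N(k)$.

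Correctness follows by a routine induction on the tree: each successful leaf produces a set $S^*$ which is an $N$-truncated right order extending $S$, and conversely any $N$-truncated right order $T \supseteq S$ is reached by following at each branching step the choice that agrees with $T$. Combined with the equivalence above, the procedure returns \textbf{yes} if and only if $S$ extends to a right order of $\m{F}(k)$.

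The only real subtlety --- and the main thing to get right --- is the fixed choice of $N$. The Clay--Smith theorem on its own guarantees only that \emph{some} $N$ works, and a priori one might fear that ever larger truncation bounds are required. The monotonicity argument in the first paragraph, intersecting a putative right order with $F_N(k)$, is what rules this out and makes the search bounded. Everything else is a straightforward finite combinatorial enumeration.
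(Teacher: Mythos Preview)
Your proof is correct and follows essentially the same approach as the paper: fix $N$ to be the maximal length of an element of $S$, close under products within $F_N(k)$, and branch on undecided elements of $F_{N-1}(k)\setminus\{\e\}$, terminating because $F_N(k)$ is finite. You are in fact more careful than the paper in explicitly justifying why this particular $N$ suffices (via the intersection $P_\le \cap F_N(k)$ of a putative right order with the truncation), a point the paper leaves implicit.
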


\noindent
Moreover, using Theorem~\ref{t:main}, we obtain also a decision procedure for Problem 2. 

\begin{corollary}\label{c:hollmc}
The problem of checking whether an equation is valid in all $\ell$-groups is decidable.
\end{corollary}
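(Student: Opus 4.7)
The plan is to combine the normal-form discussion from Section~2 with Theorem~\ref{t:main} and Corollary~\ref{c:dec} to obtain an effective decision procedure. First, given an arbitrary $\ell$-group equation $s \eq t$, I would reduce it algorithmically to an inequation of the form $\e \le t_1 \lor \ldots \lor t_n$ in some free generators $x_1,\ldots,x_k$, where each $t_i$ is a (reduced) group term. This uses the fact already noted after the definition of $\ell$-groups in Section~2: every $\ell$-group term is equivalent in $\vty{LG}$ to a term of the form $\bigwedge_{i\in I}\bigvee_{j\in J_i} t_{ij}$ with $t_{ij}$ group terms, and this rewriting (together with the translation of $s \eq t$ into $\e \le s\iv{t} \land t\iv{s}$) is clearly effective.

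The second step is to apply Theorem~\ref{t:main}: checking whether $\vty{LG} \models \e \le t_1 \lor \ldots \lor t_n$ is equivalent to checking whether the finite set $\{t_1,\ldots,t_n\} \subseteq F(k)$ fails to extend to a right order of the free group $\m{F}(k)$. Thus the validity problem is reduced uniformly to a negated instance of Problem~1 over a finitely generated free group, with the number of generators bounded by the number of variables occurring in the original equation.

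The third and final step is to invoke Corollary~\ref{c:dec}, which tells us that the extension problem for finite subsets of finitely generated free groups is decidable via the truncation procedure based on the Clay--Smith characterization. Negating the answer yields the desired decision of the original $\ell$-group equation.

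The only potentially subtle point I foresee is bookkeeping in the reduction of step~one: making sure the rewriting to meet-of-joins form terminates and produces group terms over the same generator set, and that the reduction of a general equation to a single hypersequent-style inequation is genuinely computable. This is standard, however, and no new mathematical ingredient beyond Theorem~\ref{t:main} and Corollary~\ref{c:dec} is needed; the real content of the corollary is the reduction pipeline rather than any delicate argument.
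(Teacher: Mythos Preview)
Your proposal is correct and matches the paper's own argument: the paper simply notes that, by Theorem~\ref{t:main}, validity of $\e \le t_1 \lor \ldots \lor t_n$ in $\vty{LG}$ is equivalent to the non-extendability of $\{t_1,\ldots,t_n\}$ to a right order of $\m{F}(k)$, and then invokes Corollary~\ref{c:dec}. Your step-one discussion merely spells out the preprocessing from Section~\ref{s:orderedgroups}; the only imprecision is that an arbitrary equation reduces to a finite \emph{conjunction} of inequations $\e \le t_1 \lor \ldots \lor t_n$ (one for each meet-component of the normal form), not a single one, but this does not affect the argument.
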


\begin{example}
Consider $S = \{xx,yy,\iv{x}\iv{y}\} \subseteq F(2)$. By adding all products in $F_2(2)$ of members of $S$, we obtain
\[
S^{*}= \{xx,yy,\iv{x}\iv{y},x\iv{y},\iv{x}y,xy\}.
\]
We then consider all possible signs $\de$ for $x,y \in F_1(2)$. If we add $\iv{x}$ or $\iv{y}$ to $S^{*}$ and take products, then clearly, using $xx$ or $yy$, we obtain $\e$. Similarly, if we add $x$ and $y$ to $S^{*}$, then, taking products, using $\iv{x}\iv{y}$, we obtain $\e$. Hence we may conclude that $S$ does not extend to a right order of $\m{F}(2)$ and obtain
\[
\vty{LG} \models \e \le xx \lor yy \lor \ov{x}\,\ov{y}.
\]
Consider now $T = \{xx, xy,y\iv{x}\} \subseteq F(2)$. By adding all products in $F_2(2)$ of members of $T$, we obtain
\[
T^{*}= \{xx, xy,y\iv{x},yx,yy\}.
\]
We choose $x,y \in F_1(2)$ to be positive and obtain $\{xx, xy,y\iv{x},yx,yy,x,y\}$, a $2$-truncated right order of $\m{F}(2)$. Hence $T$ extends to a right order of $\m{F}(2)$ and
\[
\vty{LG} \not \models \e \le xx \lor xy \lor y\ov{x}.
\]
\end{example}

The decidability result stated in Corollary~\ref{c:hollmc} was first established by Holland and McCleary in~\cite{HM79} using a quite different decision procedure. Let $S$ be a finite set of reduced terms from $\m{F}(k)$. We denote by ${\rm is}(S)$ the set of initial subterms of elements of $S$, and define ${\rm cis}(S)$ to consist of all reduced non-identity terms $\iv{s}t$, where $s,t\in {\rm is}(S)$. The following equivalence (expressed quite differently using ``diagrams'') is proved in~\cite{HM79}.

\begin{theorem}[Holland and McCleary~\cite{HM79}]
The following are equivalent for $t_1,\ldots,t_n \in T(k)$:
\begin{itemize}

\item[\rm (1)]	$\vty{LG} \models \e \le t_1 \lor \ldots \lor t_n$.\smallskip

\item[\rm (2)]	There exist $s_1,\ldots,s_m \in {\rm cis}(\{t_1,\dots,t_n\})$ such that
					\[
					\e \in \sgr{\{t_1,\ldots,t_n,s_1^{\de_1},\ldots,s_m^{\de_m}\}} \
					 \mbox{ for all }\de_1,\ldots,\de_m \in \{-1,1\}.
					\]
\end{itemize}
\end{theorem}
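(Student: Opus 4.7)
The implication (2) $\Rightarrow$ (1) is immediate, since any witnesses $s_1,\ldots,s_m \in {\rm cis}(\{t_1,\ldots,t_n\})$ lie in $F(k) \setminus \{\e\}$, so condition (3) of Theorem~\ref{t:main} is satisfied and hence (1) follows at once.

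For the harder direction (1) $\Rightarrow$ (2), the plan is to extract the witnesses from an analytic proof. By Theorem~\ref{t:main} together with the preceding corollary, condition (1) implies $\der{GLG^*} t_1 \h \ldots \h t_n$. Revisiting the proof of (3) $\Rightarrow$ (1) of Theorem~\ref{t:main}, each cut-formula $\De$ appearing in an application of $\starr$ in such a derivation contributes one of the required witnesses $s_i$. Hence it suffices to establish a subterm-style property of the calculus: every $\lgc{GLG^*}$-derivation of $t_1 \h \ldots \h t_n$ can be transformed into one in which every application of $\starr$ uses a cut-formula whose reduced form belongs to ${\rm cis}(\{t_1,\ldots,t_n\})$.

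I would prove this property by induction on the height of the derivation. The rules $\gvr$ and $\splitr$ introduce no new cut-formulas, so the critical case is $\starr$. The core observation is that any reduced word in $\m{F}(k)$ can be expressed as a product of elements of the form $\iv{s}t$ with $s,t$ initial subterms of the $t_i$, allowing a single cut on an arbitrary $\De$ to be replaced by a sequence of cuts on such $\iv{s}t$-pieces. Using the admissibility of weakening $\ewr$ (Lemma~\ref{l:calculiequivalent}) and an argument analogous to the cut analysis used there, the two subderivations of $\mg \h \De$ and $\mg \h \ov{\De}$ are recombined via iterated applications of $\starr$ whose cut-formulas all lie in ${\rm cis}(\{t_1,\ldots,t_n\})$.

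The main obstacle is the combinatorial bookkeeping of this decomposition. One needs a well-founded measure --- for instance, the number of literal positions in the reduced form of $\De$ that are not aligned with an initial subterm of some $t_i$ --- and one must verify that each replacement step strictly decreases this measure while preserving the side condition of $\starr$ that intermediate cut-formulas remain non-group-valid. A secondary difficulty is the interaction with the surrounding hypersequent context $\mg$, which may itself need to be reshaped via $\splitr$ during the transformation to accommodate the smaller cuts. Succeeding on these technical points yields the refined Holland--McCleary characterization within the proof-theoretic framework developed in the preceding sections.
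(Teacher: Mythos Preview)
The paper does not supply its own proof of this theorem: it is quoted from~\cite{HM79} (where it is established via ``diagrams''), and the surrounding text merely observes that the finiteness of ${\rm cis}(\{t_1,\ldots,t_n\})$ yields decidability. So there is no in-paper argument to compare against; your proposal is an attempt to \emph{recover} Holland--McCleary from the calculus $\lgc{GLG^*}$, which would be new content.

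Your direction (2)~$\Rightarrow$~(1) is fine. The direction (1)~$\Rightarrow$~(2), however, has a genuine gap that goes beyond bookkeeping. Your ``core observation'' --- that a reduced cut-formula $\De$ can be written as a product of elements $\iv{s}t$ with $s,t$ initial subterms of the $t_i$ --- is simply false in general: the axiom rule $\gvr$ admits \emph{arbitrary} group-valid sequents, so a $\lgc{GLG^*}$-derivation of $t_1 \h \ldots \h t_n$ may well contain cut-formulas involving variables or subwords entirely foreign to the $t_i$. There is no a priori reason such a $\De$ decomposes over ${\rm cis}(\{t_1,\ldots,t_n\})$, and your proposed induction on derivation height cannot manufacture the required relationship. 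What is really needed is an analyticity (subterm) property for $\lgc{GLG^*}$, and the paper itself flags this as open: the Remark closing Section~4 notes that the known analytic calculus from~\cite{GM16} uses different rules and a ``rather complicated cut elimination procedure,'' and that its connection to right orders is unclear. In short, you have correctly identified \emph{what} would suffice, but the step you label ``the main obstacle'' is not a technicality --- it is essentially the entire content of the Holland--McCleary theorem, and your sketch does not supply a mechanism to discharge it.
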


\noindent
Since the set ${\rm cis}(\{t_1,\dots,t_n\})$ is finite and checking $\e \in \sgr{S}$ for a finite subset $S$ of $F(k)$ is decidable, we obtain a decision procedure for Problem~2. Moreover, again using Theorem~\ref{t:main}, we obtain also a decision procedure for Problem~1.

\begin{remark}
Variants of the hypersequent calculi $\lgc{GLG^*}$ and $\lgc{GLG}$ were defined already in~\cite{GM16}, but without the connection to right orders on free groups. They were used to give an alternative proof of Holland's theorem (see~\cite{Hol76}) that the algebra $\m{Aut}(\langle \R,\le \rangle)$  generates the variety $\vty{LG}$ of $\ell$-groups and also to prove that the equational theory of $\ell$-groups is co-NP complete. Let us note here that it follows from the results above that the problem of checking whether a finite subset of $\m{F}(k)$ extends to a right order must also be in co-NP; hardness, however, is still an open problem. Let us also remark that in~\cite{GM16}, the following {\em analytic} (i.e., having the subformula property) hypersequent calculus is shown to be sound and complete for $\ell$-groups:
\[
\begin{array}{ccccc}
\infer[\gvr]{\mg \h \Ga}{} & \quad & 
\infer[{\mixr}]{\mg \h \Ga, \De}{\mg \h \Ga & \mg \h \De} & \quad & 
\infer[{\comr}]{\mg \h \Ga, \De \h \PI, \Si}{\mg \h \Ga, \Si & \mg \h \PI, \De} \smallskip \\
\mbox{$\Ga$ group valid} & & & &
\end{array} 
\]
The proof, however, relies on a rather complicated cut elimination procedure and it is not yet clear how this calculus might relate to right orders on free groups.
\end{remark}


\section{Ordering Free Groups and Validity in Ordered Groups}

In this section, we consider the variety $\vty{RG}$ of {\em representable $\ell$-groups} generated by the class of o-groups. Similarly to the previous section, we establish the following theorem relating validity of equations in this variety (equivalently, the class of o-groups) to extending finite subsets of free groups to (total) orders.

\begin{theorem}\label{t:mainorderedgroups}
The following are equivalent for $t_1,\ldots,t_n \in T(k)$:

\begin{itemize}

\item[\rm (1)]		$\vty{RG} \models \e \le t_1 \lor \ldots \lor t_n$.\smallskip

\item[\rm (2)]		$\{t_1,\ldots,t_n\}$ does not extend to an order of $\m{F}(k)$.\smallskip

\item[\rm (3)]		There exist $s_1,\ldots,s_m \in F(k) \setminus \! \{\e\}$ such that
					\[
					\e \in \nsgr{\{t_1,\ldots,t_n,s_1^{\de_1},\ldots,s_m^{\de_m}\}} \
					 \mbox{ for all }\de_1,\ldots,\de_m \in \{-1,1\}.
					\]
				
\end{itemize}
\end{theorem}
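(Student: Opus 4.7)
The plan is to mirror the proof of Theorem~\ref{t:main}, replacing right orders with (total) orders, subsemigroups $\sgr{\cdot}$ with normal subsemigroups $\nsgr{\cdot}$, and Theorem~\ref{t:orderinggroups} with the corresponding ordering theorem of Fuchs~\cite{Fuc63} for general groups: a subset $S$ of a group $\m{G}$ extends to an order of $\m{G}$ if and only if for all $a_1,\ldots,a_m \in G \setminus \{\e\}$ there exist $\de_1,\ldots,\de_m \in \{-1,1\}$ with $\e \notin \nsgr{S \cup \{a_1^{\de_1},\ldots,a_m^{\de_m}\}}$.

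For (1) $\Rightarrow$ (2) I would argue contrapositively as in Theorem~\ref{t:main}: if $\{t_1,\ldots,t_n\}$ extends to an order $\le$ of $\m{F}(k)$, then its inverse $\le^\delta$ is an order of $\m{F}(k)$ in which each $t_i$ is negative, and $\langle \m{F}(k), \le^\delta\rangle$ is an o-group in $\vty{RG}$; evaluating each generator $x$ by itself then refutes $\vty{RG} \models \e \le t_1 \lor \ldots \lor t_n$. Step (2) $\Rightarrow$ (3) is immediate from the Fuchs theorem stated above.

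The main work is (3) $\Rightarrow$ (1), which I would reduce to three facts about $\vty{RG}$: (a) $\vty{RG} \models \e \le uv \lor r$ implies $\vty{RG} \models \e \le u \lor v \lor r$; (b) $\vty{RG} \models \e \le u \lor r$ if and only if $\vty{RG} \models \e \le v u \iv{v} \lor r$ for every group term $v$; and (c) $\vty{RG} \models \e \le r \lor s$ together with $\vty{RG} \models \e \le r \lor \iv{s}$ implies $\vty{RG} \models \e \le r$. Fact (a) already appears in the proof of Theorem~\ref{t:abelian} and holds in every $\ell$-group; (b) holds because conjugation preserves positivity; and (c) uses representability directly, since in any o-group one of $s,\iv{s}$ is $\le \e$, forcing one of the two premises to collapse to $\e \le r$ at every evaluation. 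Given (3), for each sign pattern $\de \in \{-1,1\}^m$ I would write $\e$ as an explicit product of conjugates $\prod_i v_i g_i \iv{v_i}$ of elements of $\{t_1,\ldots,t_n,s_1^{\de_1},\ldots,s_m^{\de_m}\}$ in $\m{F}(k)$; passing from this group identity to $\vty{RG} \models \e \le \prod_i v_i g_i \iv{v_i}$ and iterating (a) and (b) yields $\vty{RG} \models \e \le t_1 \lor \ldots \lor t_n \lor s_1^{\de_1} \lor \ldots \lor s_m^{\de_m}$. Doing this for all $2^m$ sign patterns and peeling off $s_m, s_{m-1}, \ldots, s_1$ one at a time via (c) yields $\vty{RG} \models \e \le t_1 \lor \ldots \lor t_n$.

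I expect the main obstacle to be a clean justification of (c) and the bookkeeping of the iterated eliminations. A proof-theoretic alternative, more in line with Section~4, would be to internalize (a)--(c) as the rules of a hypersequent calculus $\lgc{GRG}$ extending $\lgc{GLG}$ by a cycle/conjugation rule (such as the $\cycr$ suggested by the paper's preamble) and then derive (3) $\Rightarrow$ (1) by soundness-and-completeness arguments parallel to those of Section~4, with Fuchs' theorem playing the role of Theorem~\ref{t:orderinggroups}.
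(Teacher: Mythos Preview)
Your treatment of (1)\,$\Rightarrow$\,(2) and (2)\,$\Rightarrow$\,(3) matches the paper exactly. The gap is in your semantic argument for (3)\,$\Rightarrow$\,(1), specifically in fact~(c). Your justification---``in any o-group one of $s,\iv{s}$ is $\le\e$, forcing one of the two premises to collapse to $\e\le r$''---breaks down at evaluations $\varphi$ into an o-group with $\varphi(s)=\e$. There both hypotheses reduce to $\e\le\varphi(r)\lor\e$, which holds trivially and says nothing about $\e\le\varphi(r)$. The side condition $s\neq\e$ in $\m{F}(k)$ does not rule this out: a nontrivial word can evaluate to $\e$ in many o-groups (e.g.\ any commutator evaluates to $\e$ in every abelian o-group). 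So (c) as you argue it does not establish $\vty{RG}\models\e\le r$; to patch it you would need, given $\varphi$ with $\varphi(r)<\e$ and $\varphi(s)=\e$, to manufacture another o-group evaluation with $r$ still negative but $s$ nontrivial. The natural way to do that (e.g.\ a lexicographic product with an ordered copy of $\m{F}(k)$) already presupposes that free groups are orderable, which is the corollary you want to extract from the theorem---so the argument becomes circular.

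This is not an accident: the paper flags exactly this obstruction when it remarks that the rule $\starr$ ``is not valid as an implication between premises and conclusion,'' and that is why it does \emph{not} prove soundness of $\lgc{GLG^*}$/$\lgc{GRG^*}$ directly. Instead it takes precisely your ``proof-theoretic alternative'': it defines $\lgc{GRG^*}$ and $\lgc{GRG}$ as $\lgc{GLG^*}$ and $\lgc{GLG}$ extended by the cyclic-permutation rule $\cycr$ (your fact~(b)), shows syntactically via the pointed-hypersequent argument of Lemma~\ref{l:calculiequivalent} that $\der{GRG^*}\mg$ implies $\der{GRG}\mg$ (this is Lemma~\ref{l:rgcalculiequivalent}), and then uses the straightforward soundness of $\lgc{GRG}$ for $\vty{RG}$. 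In effect the pointed-hypersequent lemma is the missing proof of your~(c). So your outline is right, but you should commit to the calculus route rather than the semantic one; the ``clean justification of~(c)'' you worried about is the actual content of the argument, and the paper supplies it syntactically.
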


\noindent
In this case, we will not be able to obtain any decision procedure for checking these equivalent conditions. However, we do obtain a new syntactic proof of the orderability of finitely generated free groups~\cite{Neu49}. 

\begin{corollary}
Every finitely generated free group is orderable.
\end{corollary}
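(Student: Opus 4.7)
The plan is to deduce the corollary as an almost immediate application of Theorem~\ref{t:mainorderedgroups}, using the equivalence (1)$\Leftrightarrow$(2) applied to a carefully chosen singleton. The point is that orderability of $\m{F}(k)$ is witnessed by the existence of \emph{any} order at all, so it suffices to exhibit one nonempty finite subset of $F(k)\setminus\{\e\}$ that extends to an order of $\m{F}(k)$; by the contrapositive of (1)$\Rightarrow$(2), this reduces to producing one such subset for which the corresponding $\ell$-group disjunction fails in $\vty{RG}$.

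First I would dispatch the trivial case $k=0$, where $\m{F}(0)$ is the one-element group. For $k\ge 1$, I would let $x_1$ be one of the free generators, so that $x_1\neq \e$ in $\m{F}(k)$, and consider the singleton $\{x_1\}\subseteq T(k)$. To apply Theorem~\ref{t:mainorderedgroups}, I need to check that $\vty{RG}\not\models \e\le x_1$. For this I would exhibit a specific o-group and evaluation refuting the inequality: for example, take the o-group $\m{Z}=\langle\Z,\min,\max,+,-,0\rangle$ from Example~\ref{e:abelian}, which lies in $\vty{RG}$, and evaluate $x_1\mapsto -1$; then $0\not\le -1$, so indeed $\vty{RG}\not\models \e\le x_1$.

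By Theorem~\ref{t:mainorderedgroups}, condition (1) fails for $\{x_1\}$, hence condition (2) also fails: that is, $\{x_1\}$ \emph{does} extend to an order of $\m{F}(k)$. In particular, $\m{F}(k)$ carries a total order compatible with its group operation, so it is orderable.

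I do not expect any substantive obstacle here; the proof is essentially the observation that Theorem~\ref{t:mainorderedgroups} not only relates validity in $\vty{RG}$ to extendability of finite subsets, but, read contrapositively, manufactures orders of $\m{F}(k)$ from the failure of trivial $\ell$-group inequalities. The only minor care needed is to pick a witness for $\vty{RG}\not\models \e\le x_1$ inside the class generating $\vty{RG}$ (such as $\m{Z}$) and to separate off the degenerate case $k=0$.
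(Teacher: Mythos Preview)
Your proposal is correct and follows essentially the same argument as the paper: the paper also observes that $\vty{RG}\not\models \e\le x$ via the o-group $\m{Z}$ and then invokes Theorem~\ref{t:mainorderedgroups} to conclude that $\{x\}$ extends to an order of $\m{F}(k)$. Your version is simply more explicit about the evaluation in $\m{Z}$ and the degenerate case $k=0$.
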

\begin{proof}
The equation $\e \le x$ is not valid in the o-group $\m{Z}$, so $\vty{RG} \not \models \e \le x$. But then, by Theorem~\ref{t:mainorderedgroups}, there must exist an order of $\m{F}(k)$ where $x$ is positive. \qed
\end{proof}

\noindent
The proof of Theorem~\ref{t:mainorderedgroups} makes use of the following ordering theorem for groups.

\begin{theorem}[Fuchs 1963~\cite{Fuc63}] \label{t:biorderinggroups}
A subset $S$ of a group $\m{G}$ extends to an order of $\m{G}$ if and only if for all $a_1,\ldots,a_m \in G \setminus\! \{\e\}$, there exist $\de_1,\ldots,\de_m \in \{-1,1\}$ such that $\e \not \in \nsgr{S \cup \{a_1^{\de_1},\ldots,a_m^{\de_m}\}}$.
\end{theorem}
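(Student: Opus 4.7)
The forward direction is direct: suppose $S \subseteq P_\le$ for some order $\le$ of $\m{G}$. Given $a_1,\ldots,a_m \in G \setminus \{\e\}$, totality of $\le$ lets us set $\de_i = 1$ when $\e < a_i$ and $\de_i = -1$ otherwise, placing every $a_i^{\de_i}$ in $P_\le$. Since $P_\le$ is a normal subsemigroup omitting $\e$ and contains $S \cup \{a_1^{\de_1},\ldots,a_m^{\de_m}\}$, the normal subsemigroup $\nsgr{S \cup \{a_1^{\de_1},\ldots,a_m^{\de_m}\}}$ is a subset of $P_\le$, and so omits $\e$.

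For the converse, let $\mathcal{F}$ be the family of subsets $T \subseteq G$ with $S \subseteq T$ satisfying the displayed extension property; the hypothesis says $S \in \mathcal{F}$. I would first verify that $\mathcal{F}$ is closed under unions of chains: any element of $\nsgr{T \cup X}$ uses only finitely many elements of $T = \bigcup_\alpha T_\alpha$, so if for every one of the $2^m$ choices of signs $\vec{\de}$ the identity $\e$ lay in $\nsgr{T \cup \{a_i^{\de_i}\}}$, each such membership would already occur in some $\nsgr{T_{\alpha(\vec{\de})} \cup \{a_i^{\de_i}\}}$, and a finite maximum across the sign choices would produce a single $T_{\alpha^*}$ for which the extension property fails. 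By Zorn's lemma, $\mathcal{F}$ has a maximal element $P$. Setting $m = 0$ in the extension property gives $\e \notin \nsgr{P}$, and as $\nsgr{P}$ clearly lies in $\mathcal{F}$, maximality forces $P = \nsgr{P}$: a normal subsemigroup of $\m{G}$ omitting $\e$ and containing $S$.

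The core of the argument is then to show $G = P \cup \iv{P} \cup \{\e\}$. Suppose for a contradiction that some $a \in G \setminus \{\e\}$ satisfies $a, \iv{a} \notin P$. The key claim is that at least one of $P \cup \{a\}$ and $P \cup \{\iv{a}\}$ still lies in $\mathcal{F}$, contradicting the maximality of $P$. If not, there exist witnesses $a_1,\ldots,a_m \in G \setminus \{\e\}$ such that for every choice of signs $\de_1,\ldots,\de_m$, $\e \in \nsgr{P \cup \{a, a_1^{\de_1},\ldots,a_m^{\de_m}\}}$, and witnesses $b_1,\ldots,b_p \in G \setminus \{\e\}$ such that for every choice of signs $\eta_1,\ldots,\eta_p$, $\e \in \nsgr{P \cup \{\iv{a}, b_1^{\eta_1},\ldots,b_p^{\eta_p}\}}$. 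Applying the extension property for $P$ to the concatenated list $a_1,\ldots,a_m,b_1,\ldots,b_p,a$ produces signs $\de'_i, \eta'_j, \varepsilon \in \{-1,1\}$ with $\e \notin \nsgr{P \cup \{a_1^{\de'_1},\ldots,a_m^{\de'_m}, b_1^{\eta'_1},\ldots,b_p^{\eta'_p}, a^{\varepsilon}\}}$; the case $\varepsilon = 1$ then contradicts the first family of witnesses (for the specific signs $\de'_i$), and $\varepsilon = -1$ contradicts the second.

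Once $G = P \cup \iv{P} \cup \{\e\}$ is in hand, the relation $\le^P$ of Section~\ref{s:orderedgroups} is an order of $\m{G}$ whose positive cone contains $S$. I expect the combinatorial ``joining'' step — simultaneously encoding the would-be obstructions to both extensions $P \cup \{a\}$ and $P \cup \{\iv{a}\}$ and then defeating them by a single application of the hypothesis to the merged list — to be the main obstacle, since this is precisely where the ``$\forall$ tuple, $\exists$ signs'' quantifier alternation in the extension property is exploited. The rest is a routine Zorn-style construction once the correct family $\mathcal{F}$ is identified.
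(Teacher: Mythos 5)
Your proof is correct. Note that the paper itself gives no proof of this theorem---it is quoted from Fuchs's book as a known ordering theorem---so there is nothing internal to compare against; your argument is the standard one: the forward direction reads off signs from totality and uses that $P_\le$ is a normal subsemigroup omitting $\e$, and the converse runs Zorn's lemma over the family of supersets of $S$ retaining the extension property, with the two essential points handled correctly (finite-character of membership in $\nsgr{\cdot}$ to get chain-closure, and the ``merged list'' application of the hypothesis to $a_1,\ldots,a_m,b_1,\ldots,b_p,a$ to show that $a$ or $\iv{a}$ can be adjoined, which is exactly where the quantifier alternation is used).
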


Similarly to the previous section, we introduce hypersequent calculi $\lgc{GRG^*}$ and $\lgc{GRG}$ as extensions of, respectively,  $\lgc{GLG^*}$ and $\lgc{GLG}$ with the rule
\[
\infer[\cycr]{\mg \h \Ga, \De}{\mg \h \De, \Ga}
\]
and establish the following relationship between these calculi.

\begin{lemma}\label{l:rgcalculiequivalent}
For any non-empty hypersequent $\mg$, if $\der{GRG^*} \mg$, then $\der{GRG} \mg$.
\end{lemma}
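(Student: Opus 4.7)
The plan is to mirror the proof of Lemma~\ref{l:calculiequivalent} step by step. Since $\cycr$ is a rule of $\lgc{GRG}$, it suffices to show that $\splitr$ and $\starr$ are $\lgc{GRG}$-admissible.

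First, I would re-verify that the external weakening rule $\ewr$ is $\lgc{GRG}$-admissible by induction on the height of a derivation. All cases from the $\lgc{GLG}$ argument transfer verbatim, and the new case for $\cycr$ is immediate since $\cycr$ rewrites a single component of the hypersequent. Admissibility of $\splitr$ in $\lgc{GRG}$ then follows by the same short chain of inferences (using $\ewr$, $\emr$, and $\cutr$) as in the proof of Lemma~\ref{l:calculiequivalent}.

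For $\starr$, I would introduce a pointed hypersequent calculus $\lgc{GRG^p}$ extending $\lgc{GLG^p}$ by $\cycr$, allowed to act on the marked sequent as well as on any other component of $\mg$. The goal is to re-establish the analog of the Claim: $\der{GRG} \mg \h \PI$ if and only if $\der{GRG^p} \langle \PI, \mg \rangle$. The right-to-left direction is a routine induction on derivation height. For the left-to-right direction one reduces, as before, to the cut-reduction lemma inside $\lgc{GRG^p}$:
\[
\der{GRG^p} \langle (\Ga,\De), \mg \rangle \ \mbox{and}\ \der{GRG^p} \langle (\ov{\De}, \Si), \mh \rangle \ \Longrightarrow\ \der{GRG^p} \langle (\Ga, \Si), \mg \hh \mh \rangle,
\]
proved by induction on the sum of the heights of the two derivations. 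The previous base cases go through unchanged; the inductive step has one additional case, namely when a premise is the conclusion of $\cycr$, and this is handled by applying the induction hypothesis to the premise of $\cycr$ and then reapplying $\cycr$.

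Finally, admissibility of $\starr$ is proved by induction on the height of a $\lgc{GRG^p}$-derivation of $\langle \De, \mg \rangle$, with a new case for $\cycr$ that reduces to the induction hypothesis followed by an application of $\cycr$ in $\lgc{GRG}$. The main obstacle I anticipate is the case where $\cycr$ is applied to the marked sequent, so that the pointed component is rewritten from $\De$ to a cyclic permutation $\De'$. Here the key observation is that, because $\cycr$ is available, derivability of a pointed hypersequent is preserved under cyclic permutation of the marked sequent, and the axiomatic conditions (``some sequent is group valid'' and ``both $\De$ and $\ov{\De}$ appear'') continue to hold after such a permutation, possibly modulo a subsequent application of $\cycr$. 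Once this invariance is carefully checked, the remainder of the argument is syntactic bookkeeping identical to Lemma~\ref{l:calculiequivalent}.
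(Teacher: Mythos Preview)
Your plan follows the paper's strategy closely, but there is one design choice that creates a genuine gap. You let $\cycr$ act on the \emph{marked} sequent in $\lgc{GRG^p}$, whereas the paper adds only the \emph{restricted} rule
\[
\infer[\cycr]{\langle \PI, (\mg \hh \Ga, \De) \rangle}{\langle \PI, (\mg \hh \De, \Ga) \rangle}
\]
and then proves separately, by a straightforward induction on height, that $\der{GRG^{p}} \langle (\Ga,\De),\mg\rangle$ implies $\der{GRG^{p}} \langle (\De,\Ga),\mg\rangle$, i.e., that cycling the marked component is \emph{admissible}.

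The problem with your version shows up already in the cut-reduction step for the Claim, not only in the final $\starr$-admissibility argument. Suppose $\langle(\Ga,\De),\mg\rangle$ is obtained by $\cycr$ on the marked sequent from $\langle(\De,\Ga),\mg\rangle$. You cannot simply ``apply the induction hypothesis to the premise of $\cycr$ and then reapply $\cycr$'': the premise no longer has the shape $(\text{something},\De)$ required to cut against $\langle(\ov{\De},\Si),\mh\rangle$. The natural repair---cycle the other premise to $\langle(\Si,\ov{\De}),\mh\rangle$ and cut in the opposite order---raises that derivation's height by one, so the sum of heights does not decrease and the induction hypothesis does not apply. To make this work you would need a \emph{height-preserving} version of marked $\cycr$, which you have not argued for.

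The paper avoids all of this by keeping $\cycr$ off the marked component in $\lgc{GRG^p}$. Then the cut-reduction lemma and the $\starr$-admissibility argument have no new problematic case (the only new inductive case is $\cycr$ on an unmarked component, which is indeed handled by the induction hypothesis followed by $\cycr$). The separately proved admissibility of marked $\cycr$ is used only once, in the left-to-right direction of the Claim, to absorb a $\lgc{GRG}$-application of $\cycr$ to the component being marked.
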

\begin{proof}
The proof is almost exactly the same as that of Lemma~\ref{l:calculiequivalent} except that we must take account also of the extra rule $\cycr$. That is, we define the pointed hypersequent calculus $\lgc{GRG^p}$ as the extension of $\lgc{GLG^p}$ with the restricted rule 
\[
\infer[\cycr]{\langle \PI, (\mg \hh \Ga, \De) \rangle}{\langle \PI, (\mg \hh \De, \Ga) \rangle}
\]
and prove that $\der{GRG} \mg\h \PI$ if and only if $\der{GRG^p} \langle \PI, \mg \rangle$. In this case, we also prove by   a straightforward induction on the height of a derivation in $\lgc{GRG^{p}}$ that 
\[
\der{GRG^{p}} \langle (\Ga, \De), \mg \rangle \quad \Longrightarrow \quad \der{GRG^{p}} \langle (\De, \Ga), \mg  \rangle.
\]
Finally, the proof that $\starr$ is admissible in $\lgc{GRG^{p}}$ proceeds in exactly the same way as in the proof of  Lemma~\ref{l:calculiequivalent}. \qed
\end{proof}
\medskip

\noindent
{\em Proof of Theorem~\ref{t:mainorderedgroups}}. 

\smallskip

\noindent
(1) $\Rightarrow$ (2).  Suppose contrapositively that $\{t_1,\ldots,t_n\}$  extends to an order of $\m{F}(k)$. Then the inverse order is an order $\le$ of $\m{F}(k)$ where $t_1,\ldots,t_n$ are negative. But this ordered group may also be viewed as a representable $\ell$-group and taking the  evaluation mapping $t \in T(k)$ to $t \in F(k)$, we obtain $\vty{RG} \not \models \e \le t_1 \lor \ldots \lor t_n$.\smallskip

\noindent
(2) $\Rightarrow$ (3). Immediate from Theorem~\ref{t:biorderinggroups}.

\smallskip

\noindent
(3) $\Rightarrow$ (1). Consider $s_1,\ldots,s_m \in F(k) \setminus \! \{\e\}$ where $\e \in \nsgr{\{t_1,\ldots,t_n,s_1^{\de_1},\ldots,s_m^{\de_m}\}}$ for all $\de_1,\ldots,\de_m \in \{-1,1\}$. We prove first that $\der{GRG^*} t_1 \h \ldots \h t_n$. For each  choice of $\de_1,\ldots,\de_m \in \{-1,1\}$, there exist $l > 0$ and conjugates $r_1,\ldots,r_l$ of $t_1,\ldots,t_n,s_1^{\de_1},\ldots,s_m^{\de_m}$ such that $\e = r_1 \cdot \ldots \cdot r_l$ in $\m{F}(k)$. So $\vty{G} \models  \e \eq r_1 \cdot \ldots \cdot r_l$ and, by $\gvr$,  $\der{GRG^*} r_1 \cdot \ldots \cdot r_l$. But then, by $\splitr$ and $\cycr$, also $\der{GRG^*} t_1 \h \ldots \h t_n \h s_1^{\delta_1} \h \ldots \h s_m^{\delta_m}$. Hence, by repeated applications of $\starr$, we get $\der{GRG^*} t_1 \h \ldots \h t_n$. It follows now also by Lemma~\ref{l:rgcalculiequivalent} that $\der{GRG} t_1 \h \ldots \h t_n$. Finally, a simple induction on the height of a derivation in $\lgc{GLG}$ shows that $\vty{RG} \models \e \le t_1 \lor \ldots \lor t_n$ as required. 
\qed

\medskip

\noindent
Soundness and completeness for  $\lgc{GRG^*}$ and $\lgc{GRG}$ follow directly.

\begin{corollary}
The following are equivalent for any hypersequent $\mg$: 
\[
 {\rm (1)}  \ \vty{RG} \models \mg; \quad {\rm (2)} \ \der{GRG} \mg; \quad {\rm (3)} \ \der{GRG^*} \mg.
\]
\end{corollary}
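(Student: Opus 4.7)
The plan is to present the corollary as the direct packaging of Theorem~\ref{t:mainorderedgroups} together with Lemma~\ref{l:rgcalculiequivalent}, closing the cyclic chain (1) $\Rightarrow$ (3) $\Rightarrow$ (2) $\Rightarrow$ (1). Writing a given non-empty hypersequent as $\mg = \Ga_1 \h \ldots \h \Ga_n$ and identifying each sequent $\Ga_i$ with the group term $t_i$ obtained by multiplying its literals, the assertion $\vty{RG} \models \mg$ becomes exactly $\vty{RG} \models \e \le t_1 \lor \ldots \lor t_n$, which is condition (1) of Theorem~\ref{t:mainorderedgroups}.

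For (1) $\Rightarrow$ (3), I would invoke Theorem~\ref{t:mainorderedgroups} to obtain witnesses $s_1,\ldots,s_m \in F(k) \setminus \{\e\}$ satisfying its condition (3), and then reuse the argument from the (3) $\Rightarrow$ (1) direction of that theorem, which already constructs a $\lgc{GRG^*}$-derivation of $t_1 \h \ldots \h t_n$ by applying $\gvr$, $\splitr$, $\cycr$, and iteratively $\starr$. The implication (3) $\Rightarrow$ (2) is immediate from Lemma~\ref{l:rgcalculiequivalent}.

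The remaining implication (2) $\Rightarrow$ (1) is soundness of $\lgc{GRG}$ for $\vty{RG}$, which I would establish by induction on the height of a derivation, checking each of the four rules. The rules $\gvr$, $\emr$, and $\cutr$ are sound already in all of $\vty{LG}$: $\gvr$ because a group-valid $\Ga$ evaluates to $\e$; $\emr$ because every $\ell$-group satisfies $\e \le a \lor \iv{a}$; and $\cutr$ by a short $\ell$-group computation (which can be checked pointwise in ${\m{Aut}}(\langle \R, \le \rangle)$, using that this algebra generates $\vty{LG}$). The new rule $\cycr$ is where representability genuinely enters: in any o-group the totality of the order reduces $\e \le \mg \lor \De\Ga$ to either $\e \le \mg$ or $\e \le \De\Ga$, and in the latter case conjugation by $\iv{\De}$ (which preserves positivity in every $\ell$-group) gives $\e \le \Ga\De$; hence $\cycr$ is sound in all o-groups, and therefore in $\vty{RG}$, which they generate.

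The main obstacle is pinpointing and justifying the soundness of $\cycr$: this is the only step where the o-group generation of $\vty{RG}$ is truly needed, and one must be careful since $\cycr$ is not sound in the full variety $\vty{LG}$ (otherwise the separate section on representable $\ell$-groups would be superfluous). Everything else is routine bookkeeping or an appeal to machinery already in place, and once the four rules are verified the induction closes the cyclic chain.
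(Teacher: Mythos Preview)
Your proposal is correct and matches the paper's approach: the paper simply states that the corollary ``follows directly'' from Theorem~\ref{t:mainorderedgroups}, whose (3)$\Rightarrow$(1) direction already establishes the chain $\vty{RG}\models\mg \Rightarrow \der{GRG^*}\mg \Rightarrow \der{GRG}\mg \Rightarrow \vty{RG}\models\mg$ via Lemma~\ref{l:rgcalculiequivalent} and a soundness induction. Your explicit verification of $\cycr$ in o-groups (via conjugation) is exactly the content the paper leaves implicit in its ``simple induction'' remark, so this is elaboration rather than a different route.
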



\bibliographystyle{plain}

\end{document}